\newtheorem{theorem}{Theorem}[section]
\newtheorem{proposition}[theorem]{Proposition}
\newtheorem*{proposition*}{Proposition}
\newtheorem*{theorem*}{Theorem}
\newtheorem*{lemma*}{Lemma}
\newtheorem*{corollary*}{Corollary}
\newtheoremstyle{TheoremNum}
      {\topsep}{\topsep}              		
      {\itshape}                      		
      {}                              		
      {\bfseries}                     		
      {.}                             		
      { }                             		
      {\thmname{#1}\thmnote{ \bfseries #3}}	
\theoremstyle{TheoremNum}
\theoremstyle{remark}
\title{Decomposition and pointwise estimates of periodic {G}reen functions of some elliptic equations with periodic oscillatory coefficients}
\author{Marc Josien \thanks{CERMICS, \'Ecole des Ponts, 6 et 8 avenue Blaise Pascal, 77455 Marne-La-Vall\'ee Cedex 2, (email: marc.josien@enpc.fr)}}
\begin{document}

\maketitle
\begin{abstract}
    This article is about the $\mathbb{Z}^d$-periodic Green function~$G_n(x,y)$ of the multiscale elliptic operator~$Lu=-{\rm div}\left( A(n\cdot) \cdot \nabla u \right)$, where~$A(x)$ is a $\mathbb{Z}^d$-periodic, coercive, and H\"older continuous matrix, and~$n$ is a large integer.
    We prove here pointwise estimates on~$G_n(x,y)$, $\nabla_x G_n(x,y)$, $\nabla_y G_n(x,y)$ and~ $\nabla_x \nabla_y G_n(x,y)$ in dimensions~$d \geq 2$.
    Moreover, we derive an explicit decomposition of this Green function, which is of independent interest.    
    These results also apply for systems.
\end{abstract}

\paragraph{Keywords:} Green function, periodic homogenization, multiscale problems.

\section{Introduction}
    In this article, we consider the periodic Green function~$G_n(x,y)$ associated with the multiscale problem 
    \begin{align}\label{Homog_Green_DivAGradn}
	\left\{
	    \begin{aligned}
	        &-{\rm div} \left( A(nx) \cdot \nabla u_n(x)\right)=f(x)-\int_{{\rm Q}} f && \quad\text{for}\quad x \in {\rm Q},\\
		& \int_{{\rm Q}} u_n= 0 \quad\text{and}\quad \text{$u_n$ is ${\rm Q}$-periodic},
	    \end{aligned}
	\right.
    \end{align}
    where~$n \in \mathbb{N}$ is expected to be very large and~${\rm Q}=[-1/2,1/2]^d$ is the unit cube in dimensions~$d \geq 2$.
    Hereafter, we write ``periodic'' for ``${\rm Q}$-periodic''.
    Here~$A$ satisfies the classical assumptions of ellipticity, periodicity and H\"older continuity (see~\cite{AvellanedaLin}).
    We first derive pointwise estimates for~$G_n$ and its derivatives~$\nabla_x  G_n$, $\nabla_yG_n$ and~$\nabla_x \nabla_y G_n$.
    Although these estimates are seemingly classical, we have not found them elsewhere in the literature  in the special case of periodic boundary conditions.
    In particular, we refer the reader to~\cite{LegollGreen}, which collects similar estimates, but concerning the Green function of elliptic problems with periodic coefficients in~$\mathbb{R}^d$ (and not the periodic Green function).
    We also express the periodic Green function~$G_n$ in terms of the Green function of the operator~$-{\rm div} \left( A (n \cdot) \cdot \nabla \right)$ in~$\mathbb{R}^d$.
    This latter result, which yields an alternative proof of estimates on the periodic Green function, is of independent interest. 
    Our proofs crucially make use of homogenization tools of Avellaneda and Lin \cite{AvellanedaLin,Avellaneda1991}.
    
    Our study is motivated by the fact that, in numerical applications, numerous homogenization problems are set on cubes with periodic boundary conditions (rather than on an infinite domain).
    For example, we refer the reader to~\cite{Anantharaman_2012}, where the homogenized matrix of a random medium is approximated, by appealing to a periodic problem
    (such a strategy has been recently used in~\cite{Minvielle_2015,LegollRothe}).

    \medskip
    
    Estimating the behavior of the Green function  of elliptic problems has attracted much attention, as the Green functions are a useful tool for getting estimates.
    Indeed, the solution~$u_n$ to~\eqref{Homog_Green_DivAGradn} can be written as an integral of the forcing term (\textit{e.g.}, $f$ in~\eqref{Homog_Green_DivAGradn}, which can be in the form~$f={\rm div}(H)$), multiplied by the Green function.
    Thus, if the Green function (or its derivatives) is controlled, then one can estimate the solution~$u_n$ (or its derivatives) directly from the forcing term, using the Young inequality (see, \textit{e.g.}, \cite[Chap.\ I p.\ 7-12]{BerghLofstrom}, and see~\cite{KLSGreenNeumann} for such manipulations).
    However, let us remark that, by duality, such estimates can also be used to get back to the properties of the Green function.
    We refer the reader to~\cite{KLSGreenNeumann}, which goes back and forth from properties of the Green function to estimates on the solution to the oscillating problem.

    The behavior of the Green function~$G$ of the following Dirichlet problem:
    \begin{align*}
	\left\{
	\begin{aligned}
	    &-{\rm div}\left(A(x) \cdot \nabla u(x) \right)=f(x) &&\quad\text{in}\quad \Omega,
	    \\
	    &u=0 &&\quad\text{on}\quad \partial \Omega,
	\end{aligned}
        \right.
    \end{align*}
    with elliptic and bounded matrix~$A$ has been explored in the seminal article~\cite{GruterWidman} (here~$\Omega$ is a sufficiently regular bounded domain).
    Without any regularity assumption on~$A$ (and without any hypothesis about the structure of~$A$), the authors derive optimal pointwise estimates on~$G$.
    But they need to assume that the matrix~$A$ is continuous and sufficiently regular in order to obtain pointwise estimates on the gradients~$\nabla_x G$, $\nabla_yG$, and on the second derivatives~$\nabla_x \nabla_y G$.
    Loosely speaking, they show under suitable assumptions that these quantities behave as if~$G$ was the fundamental solution to the Laplace equation (see~\cite[Chap.\ II, p.\ 13-30]{GT}), namely (in dimension~$d\geq 3$):
    \begin{align}
	\label{Homog_Green_GW0}
	&\left|G(x,y)\right|\leq C |x-y|^{-d+2},
	\\
	\label{Homog_Green_GW1}
         &\left|\nabla_x G(x,y)\right| \leq C |x-y|^{-d+1}, \qquad \left|\nabla_y G(x,y)\right| \leq C |x-y|^{-d+1},
        \\
        &\left|\nabla_x\nabla_y G(x,y) \right| \leq C  |x-y|^{-d}.
        \label{Homog_Green_GW2}
    \end{align}
    Since the domain of interest~$\Omega$ is bounded, the above quantity $|x-y|$ is bounded; hence, the difficulty in the above estimates is obviously when~$x$ is close to~$y$.
    Their results have been generalized to systems of elliptic equations. In particular, the same type of results is proved in~\cite{Dolzmann}, provided that the matrix~$A$ is sufficiently regular.
    
    On the opposite side, problems like~\eqref{Homog_Green_DivAGradn} have the specificity that the coefficients~$A(n\cdot)$ are more and more oscillating when~$n$ increases, since the caracteristic scale~$1/n$ of the microstructure goes smaller and smaller.
    Therefore, the results that rely on the regularity of the coefficient do not apply \textit{uniformly}: the constants~$C$ of the estimates~\eqref{Homog_Green_GW1} and~\eqref{Homog_Green_GW2} blows up when~$n$ goes to infinity.
    With a totally different approach than above, Avellaneda and Lin have proved that the solutions to oscillatory elliptic problems enjoy H\"older and Lipschitz regularity properties, if the matrix~$A$ is elliptic, periodic, and H\"older continuous (see~\cite{AvellanedaLin}).
    For that purpose, they introduced a so-called compactness method, showing that the oscillatory problems inherit regularity from the homogenized problem.
    Applying their results to the Green function in~$\mathbb{R}^d$ itself, they derived the same type of estimates as~\eqref{Homog_Green_GW0}, \eqref{Homog_Green_GW1}, and~\eqref{Homog_Green_GW2}.
    We refer the reader to~\cite{LegollGreen} for a review on the pointwise estimates on multiscale Green functions in~$\mathbb{R}^d$, for matrices~$A$ that are elliptic, bounded, periodic and sufficiently regular.
    
    In~\cite{Avellaneda1991}, Avellaneda and Lin described the asymptotic behavior, in the limit where the small scale vanishes, of the Green function in~$\mathbb{R}^d$ of periodic elliptic equations by using the Green function of the homogenized problem.
    Using the same techniques, the authors of~\cite{KLSGreenNeumann,KLS_2013} established the same kind of asymptotics for the Green function of the multiscale problem set in a bounded domain, for Dirichlet and Neumann boundary conditions.
    
    \medskip
    
    Periodic Green functions can sometimes be expressed thanks to the associated Green functions in the whole space~$\mathbb{R}^d$.
    For example, such a decomposition can be found in~\cite{Le_Bris_GreenPer} for the case of the Laplacian.
    This consists in a series involving the Green function in $\mathbb{R}^d$, translated on the grid $\mathbb{Z}^d$.
    The main difficulty of this decomposition is to ensure that the series actually converges; in the case of the Laplacian, this is shown by resorting to the local symmetries of the Green function of the Laplacian.
    We address the question of building a similar decomposition for the case~\eqref{Homog_Green_DivAGradn}.
    
    \medskip
    
    Most of the theoretical material and ideas used in the present article are borrowed from~\cite{AvellanedaLin,Avellaneda1991,LegollGreen,KLSGreenNeumann} for the homogenization aspect, and from~\cite{Le_Bris_GreenPer} for the decomposition of the periodic Green function.

  \subsection{Main results}
    Before getting to the oscillatory problem, we first establish the existence and the uniqueness of the periodic Green function for general periodic, elliptic and bounded coefficients.
    Henceforth, we denote by a subscript  ``${\rm per}$'' the functional spaces of periodic functions:
    for example, ${\rm{L}}^2_{{\rm per}}({\rm Q})$ is the set of functions defined on~$\mathbb{R}^d$ that are periodic and square integrable on the cube~${\rm Q}$.
    We consider the operator
      \begin{align*}
	T:f \mapsto u,
      \end{align*}
      where~$f \in {\rm{L}}^2_{{\rm per}}({\rm Q})$ and~$u$ is the unique periodic solution with zero mean to
      \begin{align}\label{Homog_Green_DivAGrad}
	-{\rm div} \left( A(x) \cdot \nabla u(x)\right)=f(x)-\int_{{\rm Q}}f \quad\text{for}\quad x \in {\rm Q},
      \end{align}
      in which the matrix~$A$ is periodic, elliptic and bounded. Namely, there exists a constant~$\mu>0$ such that~$A$ satisfies
      \begin{align}
           \label{Homog_Green_Ellipticite}
	    &\mu |\xi|^2 \leq A(x) \cdot\xi \cdot \xi \leq \mu^{-1} \left|\xi\right|^2 && \forall x, \xi \in \mathbb{R}^{d},
	    \\
	    \label{Homog_Green_Periodicite}
	    &A(x+z)=A(x) && \forall x \in \mathbb{R}^d, z \in \mathbb{Z}^d.
      \end{align}
      The operator~$T$ admits the following integral formulation, involving the so-called periodic Green function~$G$ associated with the operator~$-{\rm div}\left(A\cdot \nabla \right)$:
      \begin{align}
          Tf(x)=\int_{{\rm Q}} G(x,y) f(y) {\rm{d}} y. \label{Homog_Green_DefGreen}
      \end{align}
      By classical arguments (see, \textit{e.g.}, \cite{GruterWidman}), such a Green function~$G$ exists and is unique (see Section~\ref{Homog_Green_SecExiste} for a precise statement).
      It satisfies the following equation:
      \begin{align}
	  \label{Homog_Green_DefG2}
	  -{\rm div}_x \left(A(x) \cdot \nabla_x G(x,y) \right) = \delta_y(x)-1 \quad\text{in}\quad {\rm Q}.
      \end{align}
      
    \medskip
    
    Using a method that can be found in~\cite[Th.\ 13]{AvellanedaLin} (see also the proof of \cite[Th.\ 3.3]{KLSGreenNeumann}), we show a pointwise estimate on the periodic Green function~$G$  associated with the operator~$-{\rm div}\left(A \cdot \nabla \right)$. 
    In dimension $d=2$, this estimate on~$G(x,y)$ is logarithmic, which introduces some technicalities.
    
    \begin{proposition}
      \label{Homog_Green_ThGreen1}
      Let the dimension be~$d \geq 2$.
      Assume that~$A\in {\rm{L}}^\infty_{{\rm per}}\left({\rm Q},\mathbb{R}^{d^2}\right)$ satisfies~\eqref{Homog_Green_Ellipticite} and~\eqref{Homog_Green_Periodicite}.
      Let~$G$ be the periodic Green function associated with the operator~$-{\rm div}\left(A \cdot \nabla \right)$.
      Then there exists a constant~$C>0$ that only depends on~$d$ and~$\mu$ such that the following estimates are satisfied, for all~$x \in \mathbb{R}^d$ and~$y \in x+ {\rm Q}$,  with~$x \neq y$:
      \begin{align}
	\label{Homog_Green_Green1_debut}
	  &\text{ if } d \geq 3,
	  &&\left|G(x,y) \right| \leq C |x-y|^{-d+2},
	  \\
	\label{Homog_Green_Green1_2D_debut}
	  &\text{ if } d =2,
	  &&\left|G(x,y) \right| \leq C \log(2+|x-y|).
      \end{align}
    \end{proposition}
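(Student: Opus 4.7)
The plan is to follow the classical De Giorgi--Nash--Moser scheme, as used in~\cite{AvellanedaLin,KLSGreenNeumann}, combining a purely local $L^\infty$--$L^2$ Moser bound with a global distribution-function estimate on $G(\cdot,y)$ obtained by a truncation test. No regularity of~$A$ beyond ellipticity~\eqref{Homog_Green_Ellipticite} is needed. The local ingredient is the standard Moser estimate: any weak solution $v$ of $-\mathrm{div}(A\cdot\nabla v)=g$ in $B_{r}(x_0)$ satisfies
\[
\|v\|_{L^\infty(B_{r/2}(x_0))}\leq C\bigl(r^{-d/2}\|v\|_{L^2(B_r(x_0))}+r^{2}\|g\|_{L^\infty(B_r(x_0))}\bigr).
\]
Applied at an arbitrary $x\neq y$ on the ball $B_{|x-y|/2}(x)$, which avoids the singular support in~\eqref{Homog_Green_DefG2} so that $g$ reduces to the constant~$-1$, this reduces the pointwise estimate on $G(x,y)$ to a suitable $L^2$ control of $G(\cdot,y)$ on that ball.

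The global ingredient is then a weak-type estimate for $G(\cdot,y)$ in the scale of its level sets. For $M>0$ large, I would test~\eqref{Homog_Green_DefG2} with the bounded Lipschitz periodic function $u_M=\min\bigl((G(\cdot,y)-M)_+,M\bigr)$; the singular pairing $\langle\delta_y,u_M\rangle=M$ is justified by first regularizing $\delta_y$ and then passing to the limit. Ellipticity~\eqref{Homog_Green_Ellipticite} then yields $\|\nabla u_M\|_{L^2({\rm Q})}^2\leq CM$. For $d\geq 3$, the periodic Sobolev embedding $H^1_{{\rm per}}({\rm Q})\hookrightarrow L^{2d/(d-2)}({\rm Q})$ converts this energy bound into the weak-type estimate $\bigl|\{|G(\cdot,y)|>M\}\bigr|\leq CM^{-d/(d-2)}$. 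A dyadic decomposition of~${\rm Q}$ into shells around~$y$ produces $\int_{B_r(y)}|G(\cdot,y)|^2\leq Cr^{4-d}$ for $r$ small, and the claim~\eqref{Homog_Green_Green1_debut} follows from the Moser bound applied at~$x$.

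The main obstacle, foreshadowed by the author's remark on technicalities, is the case $d=2$: the Sobolev exponent $2d/(d-2)$ degenerates and must be replaced by a Moser--Trudinger embedding of $H^1_{{\rm per}}({\rm Q})$ into an Orlicz space of exponentially integrable functions. The same energy estimate $\|\nabla u_M\|_{L^2({\rm Q})}^2\leq CM$ then produces the exponential tail bound $\bigl|\{|G(\cdot,y)|>M\}\bigr|\leq C\mathrm{e}^{-cM}$, from which uniform $L^p({\rm Q})$ control for every finite~$p$ and a dyadic summation with logarithmic loss recover~\eqref{Homog_Green_Green1_2D_debut}. Throughout, the constant~$-1$ on the right-hand side of~\eqref{Homog_Green_DefG2}, which stems from the periodic solvability condition, is only a bounded perturbation that enters through the $r^{2}\|g\|_{L^\infty}$ term in the Moser bound and is harmlessly absorbed in the stated estimates.
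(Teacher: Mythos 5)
Your approach is genuinely different from the paper's: where the paper runs a duality argument (an $L^\infty$--$L^{2d/(d+2)}$ estimate for solutions with localized right-hand sides, dualized into an $L^{2d/(d-2)}$ bound on $G(x_0,\cdot)$, then De Giorgi--Nash--Moser once more in the $y$ variable), you work directly on $G(\cdot,y)$ via a Gr\"uter--Widman-type truncation, extract a weak-type estimate, and finish with one local Moser bound. Both strategies are standard and sound in principle, and your $2$D proposal (Moser--Trudinger in place of the paper's lifting to a $3$D Green function) is also a perfectly reasonable alternative. However, as written your argument has a gap for $d\geq 4$.

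The problem is the intermediate claim $\int_{B_r(y)}|G(\cdot,y)|^2\leq Cr^{4-d}$ and, more generally, the use of the $L^2$-based Moser bound together with the weak-type estimate. The space $L^{d/(d-2),\infty}(Q)$ does \emph{not} embed into $L^2_{\mathrm{loc}}$ once $d\geq 4$, since $d/(d-2)\leq 2$ there. Concretely: if $|G(z,y)|\sim|z-y|^{2-d}$ as the estimate you are proving would give, then $\int_{B_r(y)}|G(\cdot,y)|^2\,\mathrm{d}z\sim\int_0^r s^{3-d}\,\mathrm{d}s$, which is already divergent for $d=4$; so the displayed inequality is false for $d\geq 4$, and the weak-type bound $|\{|G|>M\}|\leq CM^{-d/(d-2)}$ cannot produce it. (There is also a secondary slip: the Moser bound applied at $x$ needs the $L^2$ norm on $B_r(x)$, not $B_r(y)$; these balls are disjoint when $|x-y|=2r$.) The fix is standard and leaves your strategy intact: use the $L^1$-based interior $L^\infty$ estimate, valid by a short extra Moser iteration,
\begin{align*}
\|v\|_{L^\infty(B_{r/2}(x))}\leq C\left(r^{-d}\|v\|_{L^1(B_r(x))}+r^2\|g\|_{L^\infty(B_r(x))}\right),
\end{align*}
and observe that the weak-type bound yields $\int_{B_r(x)}|G(\cdot,y)|\leq C|B_r|^{2/d}\leq Cr^2$, giving $|G(x,y)|\leq Cr^{2-d}+Cr^2\leq C|x-y|^{2-d}$. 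The same $L^1$ substitution also streamlines your $d=2$ argument: the exponential tail $|\{|G|>M\}|\leq C\mathrm{e}^{-cM}$ gives $\int_{B_r(x)}|G|\leq Cr^2\log(1/r)$, and then the $L^1$-Moser bound yields the logarithmic estimate directly, without an $L^p$ dyadic summation. Finally, note that none of this extends to systems, since Moser iteration and the truncation test break down there; the paper's compactness-method route via~\cite{AvellanedaLin} is needed in that case, as Section~\ref{Sec_Systems} explains.
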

    The proof of Proposition~\ref{Homog_Green_ThGreen1} is postponed until Section~\ref{Homog_Green_SecGreen1}.

    One can apply the above result to the multiscale problem~\eqref{Homog_Green_DivAGradn}.
    Thus, if~$A\in {\rm{L}}^\infty_{{\rm per}}\left({\rm Q},\mathbb{R}^{d^2}\right)$ satisfies~\eqref{Homog_Green_Ellipticite} and~\eqref{Homog_Green_Periodicite}, then the periodic Green function~$G_n$  associated with the operator~$-{\rm div}\left(A(n\cdot) \cdot \nabla \right)$ satisfies~\eqref{Homog_Green_Green1_debut} or~\eqref{Homog_Green_Green1_2D_debut} (depending on the dimension~$d$), for a constant $C$ that only depends on~$d$ and~$\mu$ (and not on~$n$).

    Now, we consider a matrix~$A$ that is elliptic, periodic, and also H\"older continuous:
    \begin{align}
	   \label{Homog_Green_Holder}
	    &A \in {\rm{C}}^{0,\alpha}\left({\rm Q},\mathbb{R}^{d^2}\right),
    \end{align}
    for $\alpha \in (0,1)$.
    Using the results of~\cite{AvellanedaLin}, we derive pointwise estimates on the  gradients~$\nabla_x G_n$ and~$\nabla_y G_n$ and on the second derivatives~$\nabla_x \nabla_y G_n$ of the periodic Green function~$G_n$  associated with the operator~$-{\rm div}\left(A(n\cdot) \cdot \nabla \right)$.
    
    \begin{proposition}
	\label{Homog_Green_ThGreen2}
	Let the dimension be~$d \geq 2$.
	Assume that~$A\in {\rm{L}}^\infty_{{\rm per}}\left({\rm Q},\mathbb{R}^{d^2}\right)$ satisfies~\eqref{Homog_Green_Ellipticite}, \eqref{Homog_Green_Periodicite} and~\eqref{Homog_Green_Holder}.
	Let~$G_n$ be the periodic Green function associated with the operator~$-{\rm div}\left(A(n\cdot) \cdot \nabla \right)$.
	Then, there exists a constant~$C>0$ such that, for all~$n \in \mathbb{N} \backslash \{0\}$, $x \in \mathbb{R}^d$ and~$y \in x+ {\rm Q}$,  with~$x \neq y$,
	\begin{align}
	  \label{Homog_Green_Green2}
	    &\left|\nabla_x G_n(x,y) \right| \leq C |x-y|^{-d+1},
	    \\
	  \label{Homog_Green_Green2prim}
	    &\left|\nabla_y G_n(x,y) \right| \leq C |x-y|^{-d+1},
	    \\
	  \label{Homog_Green_Green3}
	    &\left|\nabla_x  \nabla_y G_n(x,y) \right| \leq C |x-y|^{-d}.
	\end{align}
    \end{proposition}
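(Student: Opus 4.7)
The proof will combine the uniform (in $n$) interior Lipschitz estimates of Avellaneda--Lin \cite{AvellanedaLin} for the rescaled operator $-\mathrm{div}(A(n\cdot)\nabla)$ with the pointwise bound on $G_n$ furnished by Proposition~\ref{Homog_Green_ThGreen1}. The central observation is that $A(n\cdot)$ satisfies hypotheses \eqref{Homog_Green_Ellipticite}, \eqref{Homog_Green_Periodicite} and \eqref{Homog_Green_Holder} uniformly in $n$ when viewed as a rescaling $A(\cdot/\varepsilon)$ with $\varepsilon=1/n$, so the Lipschitz estimates hold with constants depending only on $d$, $\mu$, $\alpha$, and $[A]_{C^{0,\alpha}}$.

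To prove \eqref{Homog_Green_Green2}, I would fix $y$, set $r=|x-y|/4$, and observe that $v(\cdot):=G_n(\cdot,y)$ solves
\[
-\mathrm{div}\bigl(A(n\cdot)\,\nabla v\bigr)=-1 \qquad \text{in} \qquad B(x,2r),
\]
a ball that does not contain $y$. The Avellaneda--Lin interior Lipschitz estimate then yields
\[
|\nabla_x G_n(x,y)| \leq C\Bigl(r^{-1}\|v\|_{L^\infty(B(x,2r))}+r\Bigr).
\]
Since $|z-y|\geq r$ for $z\in B(x,2r)$, Proposition~\ref{Homog_Green_ThGreen1} gives $\|v\|_{L^\infty(B(x,2r))}\leq C r^{-d+2}$ for $d\geq 3$, and $\|v\|_{L^\infty(B(x,2r))}\leq C$ for $d=2$ (using that $|x-y|\leq \sqrt{d}/2$ since $y\in x+{\rm Q}$, so the logarithm is bounded). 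Substituting gives \eqref{Homog_Green_Green2}. Estimate \eqref{Homog_Green_Green2prim} follows at once, because $(x,y)\mapsto G_n(y,x)$ is the periodic Green function associated with the adjoint operator $-\mathrm{div}(A^T(n\cdot)\nabla)$, whose coefficient satisfies the same hypotheses with the same constants.

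For \eqref{Homog_Green_Green3}, I would fix $y$ and $j\in\{1,\dots,d\}$ and set $w(x):=\partial_{y_j}G_n(x,y)$. Differentiating \eqref{Homog_Green_DefG2} (applied to $A(n\cdot)$) in $y_j$, the constant $-1$ drops out, so $w$ solves $-\mathrm{div}(A(n\cdot)\nabla w)=0$ in $\{x:x\neq y\}$. Applying the Lipschitz estimate on $B(x,2r)$ with $r=|x-y|/4$ and using \eqref{Homog_Green_Green2prim} to bound $\|w\|_{L^\infty(B(x,2r))}\leq Cr^{-d+1}$, one obtains
\[
|\nabla_x\nabla_y G_n(x,y)|\leq Cr^{-1}\|w\|_{L^\infty(B(x,2r))}\leq C|x-y|^{-d}.
\]

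The main technical obstacle is not the algebra above but the rigorous justification that $\partial_{y_j}G_n(\cdot,y)$ exists pointwise and satisfies the expected homogeneous equation away from $y$. I would handle this by approximating the forcing $\delta_y-1$ by smooth data $f_\eta\to \delta_y-1$, applying the $C^{1,\alpha}$ interior estimates of \cite{AvellanedaLin} to the resulting classical solutions $u_\eta$ (for which differentiation with respect to a parameter is standard), and passing to the limit using the uniform bounds derived above; this is the same approximation scheme used in \cite{GruterWidman, KLSGreenNeumann}, adapted here to the periodic setting with oscillating coefficients.
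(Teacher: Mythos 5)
Your argument for $d\geq 3$ is essentially the paper's: fix $y_0$, note $G_n(\cdot,y_0)$ solves the equation with constant right-hand side $-1$ in a ball away from $y_0$, invoke the Avellaneda--Lin uniform interior Lipschitz estimate \cite[Lem.~16]{AvellanedaLin} together with Proposition~\ref{Homog_Green_ThGreen1}, then transpose for $\nabla_y G_n$, and finally differentiate in $y$ (the constant drops out) and repeat. The only genuine difference is how you handle $d=2$. The paper again lifts to dimension $3$ via the identity $G_n(x,y)=\int_0^1 \widetilde G_n\bigl((x,t),(y,0)\bigr)\,{\rm{d}} t$ and integrates the $3$-dimensional estimates, whereas you observe that on the fundamental cell $|x-y|\leq\sqrt{2}$, so the logarithmic bound~\eqref{Homog_Green_Green1_2D_debut} is in fact a uniform $L^\infty$ bound on $G_n$, and then the very same Lipschitz argument goes through directly in $2$ dimensions, giving $|\nabla_x G_n|\leq C\bigl(r^{-1}\cdot\mathrm{const}+r\bigr)\leq C|x-y|^{-1}$. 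This is correct and slightly more economical: the paper's route reuses the $3$D extension already set up for Proposition~\ref{Homog_Green_ThGreen1}, while yours uses only the conclusion of that proposition, not its proof machinery. Your closing remark on justifying the $y$-differentiation of the Green function away from the singularity by approximation is a sound way to make the step rigorous; the paper does not belabor this point, but your treatment is compatible with it.
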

    The proof of Proposition~\ref{Homog_Green_ThGreen2} is postponed until Section~\ref{Homog_Green_SecGreen2}.
    
    Let us underline that the salient point of Proposition~\ref{Homog_Green_ThGreen2} is that the constant~$C$ does \textit{not} depend on the characteristic scale~$1/n$ of the microstructure.
    The latter estimates are not unexpected; see, \textit{e.g.}, \cite[Prop.\ 8]{LegollGreen} for similar estimates on the Green function in the whole space~$\mathbb{R}^d$.
    
    On the first hand, as is shown in~\cite[Th.\ 1.1]{GruterWidman}, in the case of Dirichlet boundary conditions, Estimate~\eqref{Homog_Green_Green1_debut} does not require any regularity assumption on~$A$.
    As expected, it is also the case for periodic boundary conditions.
    On the other hand, Estimates~\eqref{Homog_Green_Green2}, \eqref{Homog_Green_Green2prim} and~\eqref{Homog_Green_Green3} critically rely on the fact that~$A$ is periodic and sufficiently regular.
  
    \medskip
    Using another approach, reminiscent of~\cite[p.\ 130-131]{Le_Bris_GreenPer}, we show a decomposition for the periodic Green function~$G$.
    This formula extensively uses the corresponding Green function $\mathcal{G}$ in~$\mathbb{R}^d$ of the operator~$-{\rm div}\left(A \cdot \nabla\right)$, which satisfies
    \begin{align}
      \label{Homog_Green_DefGRd}
      -{\rm div}\left(A(x) \cdot \nabla_x \mathcal{G}(x,y) \right)=\delta_y(x) \quad\text{in}\quad \mathbb{R}^d.
    \end{align}
    
    \begin{proposition}
      \label{Homog_Green_ThDecompose}
      Let the dimension be~$d \geq 3$.
      Assume that~$A \in {\rm{L}}^\infty_{{\rm per}}\left({\rm Q},\mathbb{R}^{d^2}\right)$ satisfies~\eqref{Homog_Green_Ellipticite}, \eqref{Homog_Green_Periodicite} and~\eqref{Homog_Green_Holder}.
      Let~$G$ be the periodic Green function associated with the operator~$-{\rm div}\left(A \cdot \nabla \right)$.
      Then, the function~$G$ can be decomposed as
      \begin{align}\label{Homog_Green_Decomposition}
	G(x,y)
	=\sum_{m=0}^{+\infty} 
	\left(\sum_{k \in \Gamma_m} 
	H^k(x,y) \right),
      \end{align}
      where the functions~$H^k$ are defined by 
      \begin{align}
	\nonumber
	H^k(x,y)
	&:=
	\mathcal{G}(x,y-k)
	-\int_{{\rm Q}} \mathcal{G}(x,y+y'-k) {\rm{d}} y'-\int_{{\rm Q}} \mathcal{G}(x+x',y-k) {\rm{d}} x'
	\\
	&~~~~+\int_{{\rm Q}} \int_{{\rm Q}} \mathcal{G}(x+x',y+y'-k) {\rm{d}} y' {\rm{d}} x',
	\label{Homog_Green_DefHk}
      \end{align}
      and the function $\mathcal{G}$ by~\eqref{Homog_Green_DefGRd}, and the sets~$\Gamma_m$ by
      \begin{align}
	\label{Homog_Green_DefSet1}
        &\Gamma_0=\left\{k \in \mathbb{Z}^d, 0 \leq k\cdot \left(A^\star_{\rm s}\right)^{-1} \cdot k < 2^2 \right\}, \\
        \label{Homog_Green_DefSet2}
        &\Gamma_m =\left\{k \in \mathbb{Z}^d, 2^{2m} \leq k\cdot \left(A^\star_{\rm s}\right)^{-1} \cdot k < 2^{2m+2} \right\} && \quad\text{if}\quad m \geq 1,
      \end{align}
      where~$A^\star_{\rm s}$ is the symmetric part of the homogenized matrix~$A^\star$ associated with the matrix~$A$.
    \end{proposition}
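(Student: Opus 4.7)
The plan is to show that the doubly-indexed series in \eqref{Homog_Green_Decomposition} converges to a function satisfying the defining properties of the periodic Green function, and to conclude by uniqueness (Section~\ref{Homog_Green_SecExiste}).

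\emph{Step 1 (structure of $H^k$).} Each term $H^k(x,y)$ is well-defined for $x-y\notin k+\mathbb{Z}^d$ from the direct definition \eqref{Homog_Green_DefHk} combined with the local integrability of $\mathcal{G}(x,y)\sim|x-y|^{-d+2}$ (in dimension $d\geq 3$). Introducing the box-average operators $A_x f(x,y):=\int_{\mathrm{Q}} f(x+x',y)\,dx'$ and $A_y$ analogously, \eqref{Homog_Green_DefHk} rewrites compactly as
$$
H^k(x,y) \;=\; (I-A_x)(I-A_y)\bigl[\mathcal{G}(x,y-k)\bigr].
$$
For $|k|$ larger than a threshold depending only on $d$, $\mathcal{G}(\cdot,\cdot-k)$ is smooth on $\mathrm{Q}\times\mathrm{Q}$ and the fundamental theorem of calculus, applied successively in each variable, converts the mixed second difference into the quadruple integral
$$
H^k(x,y) \;=\; \int_{\mathrm{Q}^2}\!\!\int_{[0,1]^2} (x'\otimes y'):\nabla_x\nabla_y \mathcal{G}(x+\sigma x',\,y+\tau y'-k)\,d\sigma\,d\tau\,dx'\,dy'.
$$
Combined with the whole-space analogue of Proposition~\ref{Homog_Green_ThGreen2} (see \cite[Prop.~8]{LegollGreen}), this yields the tail estimate $|H^k(x,y)|\leq C|k|^{-d}$.

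\emph{Step 2 (convergence of the series).} Since $\sum_{k\in\mathbb{Z}^d}|k|^{-d}$ diverges logarithmically, the bound of Step~1 alone is not enough, and cancellation within each anisotropic shell $\Gamma_m$ must be exploited. I would invoke the Avellaneda--Lin expansion \cite{Avellaneda1991} of $\mathcal{G}$ about the whole-space fundamental solution
$$
G^\star(x,y) \;=\; c_d\,(\det A^\star_{\mathrm s})^{-1/2}\bigl((x-y)\cdot(A^\star_{\mathrm s})^{-1}(x-y)\bigr)^{-(d-2)/2}
$$
of $-\mathrm{div}(A^\star\nabla\cdot)$, writing $\mathcal{G}=G^\star+R$ with $|\nabla_x\nabla_y R(x,y)|\leq C|x-y|^{-d-\alpha}$ for some $\alpha>0$ (this improvement relies on the Hölder assumption \eqref{Homog_Green_Holder} and the first-order corrector construction). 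Splitting $H^k=H^k_\star+H^k_R$ accordingly, the remainder piece satisfies $|H^k_R(x,y)|\leq C|k|^{-d-\alpha}$ and contributes an absolutely convergent sum. For the principal piece, I would compare the shell-sum to a continuous integral,
$$
\sum_{k\in\Gamma_m}\nabla_x\nabla_y G^\star(a,b-k)\;\approx\;\int_{\mathcal{A}_m}\nabla_x\nabla_y G^\star(a,b-\eta)\,d\eta,
$$
over the ellipsoidal shell $\mathcal{A}_m=\{\eta:2^{2m}\le \eta\cdot(A^\star_{\mathrm s})^{-1}\eta<2^{2m+2}\}$. The continuous integral vanishes by the divergence theorem: the flux of $\nabla G^\star$ through any $(A^\star_{\mathrm s})^{-1}$-level ellipsoid is independent of the level (by the homogeneity $G^\star(Rx)=R^{-d+2}G^\star(x)$), so the inner and outer boundary contributions to $\mathcal{A}_m$ cancel exactly. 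A Riemann-sum error bound, controlled by $|\nabla^3 G^\star(z)|\lesssim|z|^{-d-1}$, then gives $\bigl|\sum_{k\in\Gamma_m}H^k_\star\bigr|\leq C 2^{-m}$. The $O(1)$ indices $k\in\Gamma_m$ for which the singularity of $\mathcal{G}(\cdot,\cdot-k)$ is near the observation point must be excised from the Riemann-sum comparison and treated directly from the original definition \eqref{Homog_Green_DefHk} as a combination of convolutions of $\mathcal{G}$; their total contribution is summable in $m$. This establishes convergence of the series on every compact subset of $\{(x,y):x-y\notin\mathbb{Z}^d\}$.

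\emph{Step 3 (identification).} Let $\tilde G$ denote the limit. By construction $\int_{\mathrm{Q}}\tilde G(x,y)\,dx = \int_{\mathrm{Q}}\tilde G(x,y)\,dy = 0$, since $(I-A_x)$ and $(I-A_y)$ project onto mean-zero functions and the established convergence allows one to interchange summation with $\int_\mathrm{Q}$. The joint-translation identity $\mathcal{G}(x+\ell,y+\ell)=\mathcal{G}(x,y)$ for $\ell\in\mathbb{Z}^d$, stemming from periodicity of $A$, combined with the dyadic-shell convergence of Step~2, yields $\mathbb{Z}^d$-periodicity of $\tilde G$ in both variables after a rearrangement argument based on $|\Gamma_m\triangle(\Gamma_m+\ell)|\lesssim 2^{m(d-1)}$. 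Applying $-\mathrm{div}_x(A(x)\nabla_x\cdot)$ term-by-term and using \eqref{Homog_Green_DefGRd}, the delta contributions reassemble into $\sum_\ell\delta_{y-\ell}(x)$, while the averaged pieces collapse to the constant $1$ inside $\mathrm{Q}$, so that on $\mathrm{Q}$ one recovers $-\mathrm{div}_x(A(x)\nabla_x\tilde G) = \delta_y(x)-1$, i.e.\ \eqref{Homog_Green_DefG2}. By uniqueness (Section~\ref{Homog_Green_SecExiste}), $\tilde G=G$.

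\emph{Main obstacle.} The crux is Step~2, requiring simultaneously (i) a sufficiently sharp Avellaneda--Lin remainder estimate to absorb $\sum_k H^k_R$ into an absolutely convergent series, and (ii) justification of the Riemann-sum flux cancellation within each anisotropic shell when the integrand is only borderline integrable over $\mathbb{R}^d$. The $(A^\star_{\mathrm s})^{-1}$-ellipsoidal geometry of the $\Gamma_m$ is precisely chosen to match the level sets of $G^\star$, which is exactly what makes the divergence-theorem argument transparent and the discretization error geometrically small.
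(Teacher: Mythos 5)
Your overall strategy mirrors the paper's: Taylor-expand $H^k$ to a quadruple integral of $\nabla_x\nabla_y\mathcal{G}$, split off a homogenized principal part, and exploit cancellation on the anisotropic shells $\Gamma_m$, then identify the limit by uniqueness. But Step~2 rests on a false estimate. You write $\mathcal{G}=G^\star+R$ and claim $|\nabla_x\nabla_y R(x,y)|\leq C|x-y|^{-d-\alpha}$; this is not what the Avellaneda--Lin result gives. The correct statement from \cite{Avellaneda1991} (cf.\ \eqref{Homog_Green_DefGreen3}) is that $\nabla_x\nabla_y\mathcal{G}(x,y)$ is close to
\[
\sum_{i,j}\partial_{x_i}\partial_{y_j}\mathcal{G}_\star(x-y)\,\bigl(e_i+\nabla w_i(x)\bigr)\otimes\bigl(e_j+\nabla w_j^\dagger(y)\bigr)
\]
up to $O(|x-y|^{-d-\beta})$; the corrector gradients $\nabla w_i$, $\nabla w_j^\dagger$ are $O(1)$ periodic oscillations, so $\nabla_x\nabla_y\mathcal{G}-\nabla_x\nabla_y G^\star$ is genuinely of order $|x-y|^{-d}$ and your ``remainder'' $H^k_R$ is not absolutely summable. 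The paper repairs exactly this: it carries the corrector factors through the Taylor expansion, observes that by periodicity they collapse to a $k$-independent factor $Q_{ij}(x,y)=w_i(x)w_j^\dagger(y)$ (with error $O(|k|^{-d-1})$), and then applies the shell-symmetry cancellation to the scalar sum $\sum_{k\in\Gamma_m}\partial_{x_i}\partial_{y_j}\mathcal{G}_\star(k)$, which the $Q_{ij}$ merely multiply. Without this factorization your principal-versus-remainder split does not close.

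Step~3 also contains a gap. You assert that $\int_{\rm Q}\tilde G(x,y)\,dy=0$ ``by construction'' because $(I-A_y)$ projects onto mean-zero functions, but this is only true for $\mathbb{Z}^d$-periodic functions in $y$, and $\mathcal{G}(x,\cdot-k)$ is not periodic. Term by term, $\int_{\rm Q}H^k(x,y)\,dy\neq 0$ in general; it is only the \emph{sum} over $k$ that has zero mean, and showing this requires the telescoping boundary-cancellation argument the paper carries out (writing partial sums as surface integrals over $\Xi_m$, using $\int_{\rm Q}y'\,dy'=0$ and $|\nabla_x\nabla_y\mathcal{G}|\lesssim|x-y|^{-d}$ to bound them by $C2^{-m}$). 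Both issues are fixable and do not alter the overall architecture, but as stated the proof is incomplete.
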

    The proof of Proposition~\ref{Homog_Green_ThDecompose} is postponed until Section~\ref{Homog_Green_SecAnother}.

    The above decomposition~\eqref{Homog_Green_Decomposition} naturally appears as a reasonable candidate, being close (but not equivalent) to the decomposition~\cite[p.\ 130-131]{Le_Bris_GreenPer}.      
    But the difficulty is to ensure that the series actually converges, in the sense that 
    \begin{align}\label{Sommable}
      \sum_{m=0}^{+\infty} 
	\left|\sum_{k \in \Gamma_m} 
	H^k(x,y) \right| < +\infty \quad\text{for}\quad x \neq y.
    \end{align}
    In~\cite[p.\ 130-131]{Le_Bris_GreenPer}, where the Laplacian with periodic boundary conditions is studied, the convergence is obtained by appealing to the \textit{local} symmetries of the Green function of the Laplacian.
    This cannot be applied to our case.
    Here, the convergence is a consequence of the \textit{long-range} behavior of the Green function~$\mathcal{G}$.
    Thanks to the periodicity of~$A$, the function~$\mathcal{G}$ can be efficiently approximated at large scale by the Green function of the homogenized problem (see~\cite{Avellaneda1991,KLSGreenNeumann}). 
    Hence, taking advantage of the long-range symmetries of the Green function of the homogenized problem, one can prove the convergence of the series in~\eqref{Homog_Green_Decomposition}.
    In this regard, we underline that, in general, the series~\eqref{Homog_Green_Decomposition} does not converge absolutely with respect to~$k$:
    \begin{align*}
      \sum_{k \in \mathbb{Z}^d} \left| H^k(x,y) \right| = +\infty \quad\text{for}\quad x \neq y.
    \end{align*}
    This fact appears as a byproduct of the proof.
    
    Last but not least, it should be underlined that  the above decomposition provides an alternative way for showing the pointwise estimates~\eqref{Homog_Green_Green1_debut} on the multiscale periodic Green function~$G_n$ of the operator~$-{\rm div}\left(A(n\cdot)\cdot\nabla\right)$.
    Indeed, the proof of Proposition~\ref{Homog_Green_ThDecompose} (in Section~\ref{Homog_Green_SecAnother}) implies that the series~\eqref{Homog_Green_Decomposition} converges uniformly with respect to~$n$.

  \subsection{Extension  to systems}\label{Sec_Systems}
    Our proof of the existence and the uniqueness of the Green function uses the De Giorgi-Nash-Moser theorem. In dimension $d \geq 3$, this ingredient can be replaced by the~${\rm{W}}^{1,p}$ and~${\rm{L}}^{\infty}$ estimates in~\cite[Lem.\ 2 \& Lem. 3]{Dolzmann}.
    Hence, there also exists a unique periodic Green function of the operator $Lu:=\left(L^\alpha u\right)_{\alpha \in [\![1,m]\!]}$ defined by
    \begin{align*}
      L^\alpha u:= - {\rm div} \left( \sum_{\beta=1}^m A^{\alpha\beta} \cdot \nabla u^\beta \right)
      =- \sum_{i, j=1}^d \partial_i \left( \sum_{\beta=1}^m A_{ij}^{\alpha\beta} \partial_j u^\beta \right),
    \end{align*}
    where~$A=\left( A_{ij}^{\alpha\beta}\right)$, for~$i, j \in [\![1,d]\!]$ and~$\alpha, \beta \in [\![1,m]\!]$, $m \in \mathbb{N}$, is \textit{continuous}, periodic, and elliptic in the following sense:
    \begin{align}
      \label{Homog_Green_EllipSys}
      \mu |\xi|^2 \leq \sum_{i, j=1}^d \sum_{\alpha, \beta=1}^m A_{ij}^{\alpha\beta}(x) \xi^{\alpha}_i \xi^{\beta}_j \leq \mu^{-1} \left|\xi\right|^2 && \forall x \in \mathbb{R}^d, \xi=\left(\xi_i^\alpha\right) \in \mathbb{R}^{dm},
    \end{align}
    In this case, the periodic Green function~$G$ (which is a matrix) satisfies, for all~$\alpha, \gamma \in [\![1,m]\!]$,
    \begin{align}
      \label{Homog_Green_DefG}
      -{\rm div}_x \left(\sum_{\beta=1}^m A^{\alpha\beta}(x) \cdot \nabla_x G^{\beta\gamma}(x,y) \right) = \delta^{\alpha\gamma} \left(\delta_y(x)-1\right) \quad\text{in}\quad {\rm Q},
    \end{align}
    where~$\delta^{\alpha\beta}$ is the Kronecker symbol.
    
    As can be seen in Sections~\ref{Homog_Green_SecGreen1} and~\ref{Homog_Green_SecGreen2}, the proofs of Propositions~\ref{Homog_Green_ThGreen1} and~\ref{Homog_Green_ThGreen2} involve arguments that are also valid if we study periodic oscillatory systems instead of equations (note that the seminal article~\cite{AvellanedaLin} dealt with systems).
    More precisely, if  $d \geq 3$, the periodic Green function~$G_n$ associated with the operator~$-{\rm div}\left(A(n\cdot)\cdot \nabla \right)$ satisfies~\eqref{Homog_Green_Green1_debut}, \eqref{Homog_Green_Green1_2D_debut}, \eqref{Homog_Green_Green2}, \eqref{Homog_Green_Green2prim}, and~\eqref{Homog_Green_Green3}, provided that~$A=\left(A_{ij}^{\alpha\beta}\right)$ is periodic, satisfies~\eqref{Homog_Green_EllipSys}, and is H\"older continuous.
    Notably, the H\"older estimate~\cite[Lem.\ 9]{AvellanedaLin} can be used instead of the De Giorgi-Nash-Moser theorem in the proof of Proposition~\ref{Homog_Green_ThGreen1}.
    Besides, the Lipschitz estimate borrowed from~\cite[Lem.\ 16]{AvellanedaLin} that we use in the proof of Proposition~\ref{Homog_Green_ThGreen2} also applies.
    
    Finally, the formula~\eqref{Homog_Green_Decomposition} can also be generalized to the case of systems, using appropriate sets~$\Gamma_m^{\alpha,\beta}$ while decomposing~$G^{\alpha\beta}$ (see Section~\ref{Homog_Green_Systems}).
    
  \subsection{Outline}
    Our article is articulated as follows.
    In Section~\ref{Homog_Green_SecExiste}, we precisely explain in which sense there exists a unique periodic Green function.
    We briefly justify this fact by classical arguments.
    Next, in Section~\ref{Homog_Green_SecGreen1}, we proceed with the proof of Proposition~\ref{Homog_Green_ThGreen1}.
    In dimension $d \geq 3$, the proof is based on a duality argument involving the De Giorgi-Nash-Moser theorem.
    In dimension $d=2$, using a trick from~\cite{AvellanedaLin}, it reduces to expressing the~$2$-dimensional periodic Green function as the integral of a~$3$-dimensional periodic Green function.
    In Section~\ref{Homog_Green_SecGreen2}, combining Estimate~\eqref{Homog_Green_Green1_debut} of Proposition~\ref{Homog_Green_ThGreen1} and the Lipschitz estimates of~\cite{AvellanedaLin}, we show~\eqref{Homog_Green_Green2} (and similarly~\eqref{Homog_Green_Green2prim}), from which we deduce~\eqref{Homog_Green_Green3}.
    Finally, in Section~\ref{Homog_Green_SecAnother}, we prove Proposition~\ref{Homog_Green_ThDecompose}, which, under suitable hypotheses, yields a decomposition for the periodic Green function.
    For the sake of simplicity, we first study the case where the homogenized matrix is the identity in Section~\ref{Homog_Green_SecId}, and then the general case in Section~\ref{Homog_Green_Sym_Mat}.
    Additional materials about such a decomposition in the case of systems can be found in the Section~\ref{Homog_Green_Systems}.
    
\section{Existence, uniqueness and basic properties of the Green function}
\label{Homog_Green_SecExiste}

    In this section, we justify that there exists a unique periodic Green function~$G$ associated with the operator~$-{\rm div}\left(A\cdot\nabla\right)$.
      It lies in the functional space~$E$ containing all the functions~$G(x,y)$ satisfying, for all~$p \in \left[ 1, \frac{d}{d-2}\right)$ (by convention, if $d=2$, then $d/(d-2)=+\infty$) and~$q \in \left[ 1, \frac{d}{d-1}\right)$, 
      \begin{align}
	  \label{Homog_Green_1}
	  &\sup_{y \in {\rm Q}} \left\|G(\cdot,y)\right\|_{{\rm{L}}^{p}({\rm Q})}<+\infty
	  &&\sup_{y \in {\rm Q}} \left\|\nabla_x G(\cdot,y)\right\|_{{\rm{L}}^{q}\left({\rm Q}\right)} <+\infty,
	  \\
	  \label{Homog_Green_3}
	  &\sup_{x \in {\rm Q}} \left\|G(x,\cdot)\right\|_{{\rm{L}}^{p}({\rm Q})} <+\infty,
	  &&\sup_{x \in {\rm Q}} \left\|\nabla_y G(x,\cdot)\right\|_{{\rm{L}}^{q}\left({\rm Q}\right)} <+\infty.
      \end{align}
      
    \begin{proposition}
	\label{Homog_Green_PropExistUne}
	Let the dimension be~$d \geq 2$.
	Assume that~$A \in {\rm{L}}^\infty_{{\rm per}}\left({\rm Q},\mathbb{R}^{d^2}\right)$ satisfies~\eqref{Homog_Green_Ellipticite} and~\eqref{Homog_Green_Periodicite}.
	Then there exists a unique periodic Green function~$G(x,y)$  associated with the operator~$-{\rm div}\left(A\cdot \nabla \right)$ -namely, $G$ satisfies~\eqref{Homog_Green_DefGreen}- that is in the space~$E$, defined by~\eqref{Homog_Green_1} and~\eqref{Homog_Green_3}.
	Moreover, the function~$G^\dagger(x,y):=G(y,x)$ is the periodic Green function associated with the operator~$-{\rm div}\left(A^T\cdot \nabla \right)$.
	Last, $G$ is the unique periodic solution in~$E$ to~\eqref{Homog_Green_DefG2} satisfying
	\begin{align}
	    \label{Homog_Green_MoyenneGreen}
	    \int_{{\rm Q}} G(x,y) {\rm{d}} y = 0 &&  \forall x \in {\rm Q},
	    \\
	    \label{Homog_Green_MoyenneGreen2}
	    \quad\text{and}\quad
	    \int_{{\rm Q}} G(x,y) {\rm{d}} x = 0 &&  \forall y \in {\rm Q}.
	\end{align}
    \end{proposition}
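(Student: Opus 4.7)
The plan is to construct the periodic Green function $G$ as the integral kernel of the solution operator $T$, by first upgrading the mapping properties of $T$ via elliptic regularity and then invoking a Riesz representation. The two key inputs, both valid under the sole assumptions \eqref{Homog_Green_Ellipticite}--\eqref{Homog_Green_Periodicite}, are the De Giorgi--Nash--Moser theorem (yielding ${\rm{L}}^\infty$ bounds for ${\rm{L}}^{p'}$ data with $p'>d/2$) and a Stampacchia-type estimate (yielding ${\rm{L}}^\infty$ bounds for data in divergence form ${\rm div}(H)$ with $H\in {\rm{L}}^{q'}$, $q'>d$). No regularity of $A$ beyond ${\rm{L}}^\infty_{{\rm per}}$ is required.

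First I would define $T$ by Lax--Milgram on the Hilbert space of mean-zero periodic ${\rm{H}}^1$-functions (coercivity being a direct consequence of \eqref{Homog_Green_Ellipticite} and Poincar\'e--Wirtinger), obtaining $T\colon {\rm{L}}^2_{{\rm per}}({\rm Q}) \to {\rm{H}}^1_{{\rm per}}({\rm Q})$ which annihilates constants. Periodicity then lets the interior ${\rm{L}}^\infty$-estimate of De Giorgi--Nash--Moser, applied on a ball of radius $2$ centered at an arbitrary $x\in{\rm Q}$ (which contains a full period cube), be converted into the global bound $\|Tf\|_{{\rm{L}}^\infty({\rm Q})} \le C \|f\|_{{\rm{L}}^{p'}({\rm Q})}$ for every $p'>d/2$, with $C$ depending only on $d$ and $\mu$. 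An analogous Stampacchia argument for divergence-form data gives $\|u\|_{{\rm{L}}^\infty({\rm Q})} \le C \|H\|_{{\rm{L}}^{q'}({\rm Q})}$ when $u$ solves $-{\rm div}(A\nabla u)={\rm div}(H)-\int_{{\rm Q}}{\rm div}(H)$ with $q'>d$.

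Next I would extract the kernel. For each $x\in {\rm Q}$, the linear form $f\mapsto Tf(x)$ on ${\rm{L}}^{p'}_{{\rm per}}({\rm Q})$ is bounded and annihilates constants; Riesz representation on the closed subspace of mean-zero functions yields a unique $G(x,\cdot)\in {\rm{L}}^p({\rm Q})$ (with $p=p'/(p'-1)<d/(d-2)$) normalized by $\int_{{\rm Q}} G(x,y)\,{\rm d} y=0$ and satisfying \eqref{Homog_Green_DefGreen}, with $\sup_x \|G(x,\cdot)\|_{{\rm{L}}^p}<\infty$; this gives \eqref{Homog_Green_MoyenneGreen} and the first inequality of \eqref{Homog_Green_3}. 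Applying the same procedure to $H\mapsto T({\rm div}(H))(x)$ similarly produces $\nabla_y G(x,\cdot)\in {\rm{L}}^q({\rm Q})$ with $q<d/(d-1)$. Carrying out the construction for the transposed operator $T^T$ associated with $A^T$ produces a second kernel $\widetilde G$ with the analogous bounds in the $x$-variable; the Green identity
\[
\int_{{\rm Q}}(Tf)\,g
=\int_{{\rm Q}} A\nabla(Tf)\cdot\nabla(T^T g)
=\int_{{\rm Q}} f\,(T^T g),
\]
valid for mean-zero $f,g$, combined with the two kernel expressions for both sides, forces $\widetilde G(y,x)=G(x,y)$, which simultaneously establishes \eqref{Homog_Green_1}, \eqref{Homog_Green_MoyenneGreen2} and the transposition claim $G^\dagger(x,y)=G(y,x)$.

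Finally, equation \eqref{Homog_Green_DefG2} is obtained by inserting a mollified Dirac mass $\delta_y^\rho$ as data in \eqref{Homog_Green_DivAGrad} and passing $\rho\to 0$; convergence holds in the distributional sense, and locally uniformly away from $y$ by the De Giorgi--Nash--Moser H\"older estimate. For uniqueness in $E$, if two elements $G_1, G_2\in E$ both satisfy \eqref{Homog_Green_DefG2}--\eqref{Homog_Green_MoyenneGreen2}, their difference $D(\cdot,y)$ is a mean-zero periodic ${\rm{W}}^{1,q}$-solution of $-{\rm div}(A\nabla D)=0$, hence identically zero by testing against itself after invoking Meyers' higher integrability to upgrade $D$ into ${\rm{H}}^1_{{\rm per}}$. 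The main obstacle will be guaranteeing the \emph{joint} measurability of $G(x,y)$ together with all four bounds \eqref{Homog_Green_1}--\eqref{Homog_Green_3} simultaneously; it is precisely the symmetric use of the adjoint operator $T^T$ that secures this and compresses all the measure-theoretic fine print into a single clean application of Riesz representation.
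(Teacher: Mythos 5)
Your overall strategy --- Lax--Milgram to build $T$, De Giorgi--Nash--Moser (plus a Stampacchia companion for divergence-form data) to obtain the $\mathrm{L}^\infty$ mapping bounds, Riesz representation to extract the kernel, the Green identity with the adjoint operator to identify the transpose and split the estimates between the two variables, a mollified Dirac to recover the PDE, and a variational argument for uniqueness --- is essentially the same route the paper takes. The paper's sketch likewise runs through Lax--Milgram, the De Giorgi--Nash--Moser regularizing estimate, the adjoint of $T$ for the transposition property, the variational formulation to derive \eqref{Homog_Green_DefG2}, \eqref{Homog_Green_MoyenneGreen}, \eqref{Homog_Green_MoyenneGreen2}, and a variational uniqueness argument. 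You have simply filled in more of the technical scaffolding.

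There is, however, a genuine gap in your final uniqueness step. You assert that a mean-zero periodic solution $D(\cdot,y)\in \mathrm{W}^{1,q}_{\mathrm{per}}(\mathrm{Q})$ of $-\mathrm{div}\left(A\cdot\nabla D\right)=0$ can be upgraded to $\mathrm{H}^1_{\mathrm{per}}$ by ``Meyers' higher integrability'' and then tested against itself. Meyers' theorem goes the other way: it promotes $\mathrm{H}^1$ weak solutions to $\mathrm{W}^{1,2+\varepsilon}$, and its self-improvement window is an interval $(2-\varepsilon_0,2+\varepsilon_0)$ with $\varepsilon_0$ depending on $\mu$ and $d$. It does not reach very weak $\mathrm{W}^{1,q}$ solutions with $q<2-\varepsilon_0$, and Serrin's classical construction shows that for merely measurable coefficients such solutions need not belong to $\mathrm{H}^1_{\mathrm{loc}}$ at all. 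In your setting the admissible exponents are $q<d/(d-1)$; for $d\ge 3$ this is at most $3/2$, generically outside the Meyers window, so ``testing $D$ against itself'' is not licensed. A direct duality test against a mollified adjoint Green function runs into the mirror-image obstruction: the latter only lies in $\mathrm{W}^{1,2+\varepsilon}$ when $A$ is merely bounded and measurable, which is not enough to pair with $\nabla D\in\mathrm{L}^q$ for small $q$. You therefore need a more careful argument here --- for instance, the subcritical $\mathrm{W}^{1,p}$ theory of Dolzmann--M\"uller that the paper itself invokes in dimension $d\ge 3$, or a regularization of the coefficients together with a stability estimate --- rather than the Meyers-based self-upgrade as stated.
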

    
      Actually, if~$d\geq 3$, the Green function is expected to satisfy the following estimates:
      \begin{align}
	  \label{Homog_Green_Marcinkiewicz1}
          &\sup_{y \in {\rm Q}} \left\|G(\cdot,y)\right\|_{{\rm{L}}^{\frac{d}{d-2},\infty}({\rm Q})} <+\infty ,
          &&\sup_{y \in {\rm Q}} \left\|\nabla_x G(\cdot,y)\right\|_{{\rm{L}}^{\frac{d}{d-1},\infty}\left({\rm Q}\right)} <+\infty ,
          \\
	  \label{Homog_Green_Marcinkiewicz2}
          &\sup_{x \in {\rm Q}} \left\|G(x,\cdot)\right\|_{{\rm{L}}^{\frac{d}{d-2},\infty}({\rm Q})} <+\infty ,
          &&\sup_{x \in {\rm Q}} \left\|\nabla_y G(x,\cdot)\right\|_{{\rm{L}}^{\frac{d}{d-1},\infty}\left({\rm Q}\right)} <+\infty,
      \end{align}
      since it is the case when homogeneous Dirichlet boundary conditions are considered (see~\cite[Th.\ 1.1]{GruterWidman}).
      Here~${\rm{L}}^{p,\infty}$ denote the Marcinkiewicz spaces (see~\cite[Chap.\ I p.\ 7-11]{BerghLofstrom} for a reference on such functional spaces).
      Thus, the proposed space $E$ is not optimal.
      However, for the purpose of the present article, it not useful to find the optimal function space, since~\eqref{Homog_Green_Marcinkiewicz1} and~\eqref{Homog_Green_Marcinkiewicz2} are a straightforward corollary of Propositions~\ref{Homog_Green_ThGreen1} and~\ref{Homog_Green_ThGreen2} below.

    \medskip
    Proposition~\ref{Homog_Green_PropExistUne} can be proven by standard arguments (see, \textit{e.g.}, \cite{GruterWidman}) that we briefly describe here (see~\cite[Chapter 3]{PhD_Marc}).
    
    First, we establish existence and uniqueness by using the regularizing properties of~\eqref{Homog_Green_DivAGrad}.
    Then, we show that~$(x,y) \mapsto G(y,x)$ is the Green function of the transposed problem by considering the adjoint operator of~$T$.
    In a third step, we use the variational formulation of~\eqref{Homog_Green_DivAGrad} and establish that the Green function~$G$ satisfies~\eqref{Homog_Green_DefG2}, \eqref{Homog_Green_MoyenneGreen} and~\eqref{Homog_Green_MoyenneGreen2}.
    Finally, we show the uniqueness of the solution to~\eqref{Homog_Green_DefG2}, \eqref{Homog_Green_MoyenneGreen} and~\eqref{Homog_Green_MoyenneGreen2}, using a variational argument.
    The detailed proof of each above argument can be found in~\cite[Chapter 3]{PhD_Marc}.

\section{Pointwise estimates on the periodic Green function}\label{Homog_Green_SecGreen1}

  This section is devoted to the proof of Proposition~\ref{Homog_Green_ThGreen1}.
  For technical reasons, we proceed first with the case of dimension~$d\geq 3$, and then with the case of dimension~$d=2$.
  
  \subsection{The case of $ d\geq 3$}
  
  Pointwise estimates on the Green functions of elliptic problems with Dirichlet boundary conditions have been established in the seminal article~\cite{GruterWidman} of Gr\"uter and Widman.
  Their proof makes use of the comparison principle.
  The latter is an appropriate tool for an elliptic \textit{equation} with homogeneous Dirichlet boundary conditions: in this case, the Green function is positive.
  But, such an argument fails when considering the periodic Green function, the sign of which varies (it has zero mean).
  As a consequence, here, we resort to a duality argument and to the De Giorgi-Nash-Moser theorem (see~\cite[Th.\ 13]{AvellanedaLin}).
  Also, when considering multiscale periodic elliptic \textit{systems}, the latter theorem does not hold and the H\"older estimates of~\cite{AvellanedaLin} are necessary for concluding the proof (see Section \ref{Sec_Systems}).
  
  The proof below is a straightforward adaptation of the proof of~\cite[Th.\ 13]{AvellanedaLin}.
  The fact that we study periodic boundary conditions do not raise substantial difficulties, since the strategy involves local estimates.
  
  \medskip
  
  Let us first explain in a few words the ingredients of the proof of Proposition~\ref{Homog_Green_ThGreen1} in the case $d\geq 3$.
  The first step of the proof consists in combining the De Giorgi-Nash-Moser theorem, the classical Hilbert theory and the Sobolev injections in order to obtain an optimal~${\rm{L}}^\infty$ estimate on the periodic solution~$u$ to~\eqref{Homog_Green_DivAGrad} for localized right-hand terms~$f$.
  By a duality argument used in~\cite[Th.\ 13]{AvellanedaLin}, this provides a local ${\rm{L}}^2$ bound on the Green function $G(x,y)$.
  Using once more the De Giorgi-Nash-Moser theorem, this proves Estimate~\eqref{Homog_Green_Green1_debut}.

  \begin{proof}[Proof of Proposition~\ref{Homog_Green_ThGreen1} in dimension $d \geq 3$]
    The proof falls in two steps.
  
    Let~$x_0 \in \mathbb{R}^d, y_0 \in x_0+Q$, $x_0\neq y_0$, and~$2r:=|x_0-y_0|$ (by periodicity, it is the only relevant case).

    \begin{figure}[h]
      \begin{center}
	\includegraphics{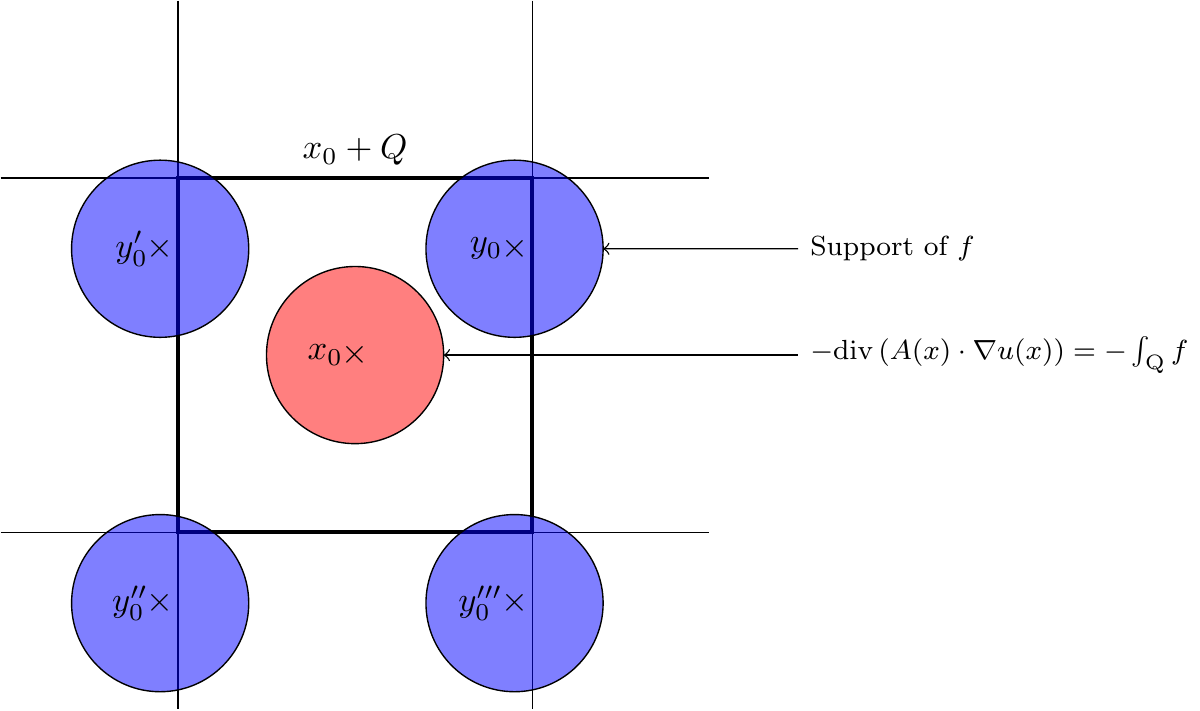}
      \end{center}
      \caption{Illustration of Step 1 of proof  of Proposition~\ref{Homog_Green_ThGreen1}.} \label{Homog_Green_Fig_Step1}
    \end{figure}
      
    \paragraph{Step 1:}
      Let us consider~$f \in {\rm{L}}_{{\rm per}}^{\frac{2d}{d+2}}\left({\rm Q}\right)$ that has a support contained in~$\mathbb{Z}^d+{\rm B}(y_0,r/4)$ (see Figure~\ref{Homog_Green_Fig_Step1}).
      Define~$u$ as the periodic solution with zero mean to~\eqref{Homog_Green_DivAGrad}. 
      Since~${\rm{L}}_{{\rm per}}^{\frac{2d}{d+2}}({\rm Q})$ is in the dual of~${\rm{H}}_{{\rm per}}^1({\rm Q})$ (see~\cite[Th.\ 9.9 p.\ 278]{Brezis}), then,  by the Lax-Milgram theorem, $u \in {\rm{H}}^1_{{\rm per}}({\rm Q})$ and there obviously holds
      \begin{align}
	  \label{Homog_Green_MajorNabla}
	  \left\|\nabla u \right\|_{{\rm{L}}^2({\rm Q})} \leq C \left\|f\right\|_{{\rm{L}}^{\frac{2d}{d+2}}\left({\rm Q}\right)}.
      \end{align}
      As the support of~$f$ is contained in~$\mathbb{Z}^d+{\rm B}(y_0,r/4)$, then~$u$ satisfies in~${\rm B}(x_0,r)$ the following equation:
      \begin{align*}
	  -{\rm div}\left( A(x)\cdot \nabla u(x) \right)=-\int_{{\rm Q}} f=-\left\{\int_{{\rm B}(y_0,r/4)} f\right\}{\rm div}\left( \frac{x-x_0}{d} \right).
      \end{align*}
      Therefore, as a consequence of the De Giorgi-Nash-Moser theorem (see~\cite[Th.\ 8.24 p.\ 202]{GT}), there exists~$\beta\in(0,1)$ and~$C>0$ depending only on~$d$ and~$\mu$ such that
      \begin{align}
	  \left|u(x_0)\right| 
	  \leq & C r^{-d/2}\left(\int_{{\rm B}(x_0,r)} \left|u(x) \right|^2{\rm{d}} x \right)^{1/2}
	  +Cr^2 \left| \int_{{\rm B}(y_0,r/4)} f \right|.
	  \label{Homog_Green_E1}
      \end{align}
      We now bound the two terms on the right-hand side.
      For the second term, by the H\"older inequality, we obtain
      \begin{align}
	  \left| \int_{{\rm B}(y_0,r/4)} f \right| \leq C r^{\frac{d-2}{2}} \left( \int_{{\rm B}(y_0,r/4)}  \left|  f(x) \right|^{\frac{2d}{d+2}} {\rm{d}} x\right)^{\frac{d+2}{2d}}.
	  \label{Homog_Green_E2}
      \end{align}
      For the first term, again using the H\"older inequality, we have
      \begin{align}
	  \left(\int_{{\rm B}(x_0,r)} \left|u(x) \right|^2{\rm{d}} x \right)^{1/2}
	  \leq & C r \left( \int_{{\rm Q}} \left|u(x)\right|^{\frac{2d}{d-2}} {\rm{d}} x  \right)^{\frac{d-2}{2d}}.
	  \label{Homog_Green_E3}
      \end{align}
      By Sobolev injection of~${\rm{H}}^1({\rm Q})$ in~${\rm{L}}^{\frac{2d}{d-2}}({\rm Q})$ (and since~$u$ has zero mean),
      \begin{align}\label{Homog_Green_E13}
	  \left( \int_{{\rm Q}} \left|u(x)\right|^{\frac{2d}{d-2}} {\rm{d}} x  \right)^{\frac{d-2}{2d}} 
	  \leq & C \left( \int_{{\rm Q}} \left| \nabla u(x)\right|^{2} {\rm{d}} x  \right)^{1/2},
      \end{align}
      whence, we deduce from~\eqref{Homog_Green_E3}, \eqref{Homog_Green_E13} and~\eqref{Homog_Green_MajorNabla} that
      \begin{align}
	  \left(\int_{{\rm B}(x_0,r)} \left|u(x) \right|^2{\rm{d}} x \right)^{1/2}
	  \leq & C r\left( \int_{{\rm B}(y_0,r/4)} \left|f(x)\right|^{\frac{2d}{d+2}} {\rm{d}} x\right)^{\frac{d+2}{2d}}.
	  \label{Homog_Green_E4}
      \end{align}
      Finally, since $2r\leq \sqrt{d}$, \eqref{Homog_Green_E1}, \eqref{Homog_Green_E2}, and~\eqref{Homog_Green_E4} yield
      \begin{align}
	  \label{Homog_Green_ulinfty}
	  \left|u(x_0)\right| \leq C r^{-(d-2)/2}\left( \int_{{\rm B}(y_0,r/4)} \left|f(x)\right|^{\frac{2d}{d+2}} {\rm{d}} x\right)^{\frac{d+2}{2d}}.
      \end{align}
      
      \paragraph{Step 2:}
      The function~$u$ can be expressed thanks to the Green function as
      \begin{align*}
	  u(x)=\int_{{\rm Q}} G(x,y) f(y) {\rm{d}} y.
      \end{align*}
      As a consequence, 
      by duality, \eqref{Homog_Green_ulinfty} implies that there exists a constant~$C>0$ such that
      \begin{align}
	  \label{Homog_Green_Glp}
	  \left( \int_{{\rm B}(y_0,r/4)} \left|G(x_0,y)\right|^{\frac{2d}{d-2}} {\rm{d}} y \right)^{\frac{d-2}{2d}} \leq C  r^{-(d-2)/2}.
      \end{align}
      As~$G(x_0,\cdot)$ satisfies
      \begin{align}\label{Homog_Green_SatisfGT}
	-{\rm div}_y \left( A^T(y)\cdot \nabla_y G(x_0,y) \right) = -1,
      \end{align}
      in~${\rm B}(y_0,r/4)$, then, using once more~\cite[Th.\ 8.24 p.\ 202]{GT},
      we obtain (in the same manner as~\eqref{Homog_Green_E1})
      \begin{align*}
	  &\left\|G(x_0,\cdot)\right\|_{{\rm{L}}^\infty\left({\rm B}(y_0,r/8)\right)} 
	  \leq r^{-d/2} \left( \int_{{\rm B}(y_0,r/4)} \left|G(x_0,y)\right|^{2} {\rm{d}} y \right)^{1/2}+Cr^2.
      \end{align*}
      By the H\"older inequality, we deduce from the above estimate that
      \begin{align*}
	&\left\|G(x_0,\cdot)\right\|_{{\rm{L}}^\infty\left({\rm B}(y_0,r/8)\right)} 
	\leq r^{\frac{2-d}{2}} \left( \int_{{\rm B}(y_0,r/4)} \left|G(x_0,y)\right|^{\frac{2d}{d-2}} {\rm{d}} y \right)^{\frac{d-2}{2d}}+Cr^2.
      \end{align*}
      Finally, since~$2r=|x_0-y_0|\leq \sqrt{d}$ and thanks to~\eqref{Homog_Green_Glp}, we deduce~\eqref{Homog_Green_Green1_debut}.
      This concludes the proof of Proposition~\ref{Homog_Green_ThGreen1} in dimension~$d\geq 3$.
  \end{proof}
  
  \subsection{The case $d=2$}\label{SecDim2}
  
    We now turn to the dimension $d=2$. 
    According to an idea that can be found in~\cite[Th.\ 13]{AvellanedaLin} (see also~\cite[Prop.\ 4]{LegollGreen}), the $2$-dimensional periodic Green function~$G$ associated with the operator~$-{\rm div}\left(A\cdot\nabla\right)$ can be expressed as
    \begin{align}
	\label{Homog_Green_3vers2}
	G(x,y) = \int_0^1 \widetilde{G}((x,t),(y,0)) {\rm{d}} t
    \end{align}
    where~$x, y \in \mathbb{R}^2$ and~$t\in \mathbb{R}$ in the equation above, and~$\widetilde{G}$ is the~$3$-dimensional periodic Green function of the following operator:
    \begin{align}
      \label{Homog_Green_DefTildeG}
      \widetilde{L}u := -{\rm div}_{x} \left( A \cdot \nabla_{x} u \right) - \partial_{tt} u.
    \end{align}
    Indeed, it is easily shown (by an integration argument) that the function~$G$ defined by~\eqref{Homog_Green_3vers2} satisfies~\eqref{Homog_Green_DefG2}, \eqref{Homog_Green_MoyenneGreen} and \eqref{Homog_Green_MoyenneGreen2}.
    Hence, $G$ is the ($2$-dimensional) periodic Green function associated with the operator~$-{\rm div}\left(A\cdot \nabla \right)$.
    
    Next, applying Proposition~\ref{Homog_Green_ThGreen1} --which we have already proved in dimension $d\geq 3$-- to the Green function $\widetilde{G}$, we obtain
    \begin{align*}
      \left|G(x,y)\right| \leq C \int_0^{1} \frac{1}{|t|+|x-y|} {\rm{d}} t \leq C \log\left(2+ |x-y| \right),
    \end{align*}
    which is~\eqref{Homog_Green_Green1_2D_debut}.
    This concludes the proof of Proposition~\ref{Homog_Green_ThGreen1} in the case~$d=2$.

\section[Pointwise estimates on the derivatives]{Pointwise estimates on the derivatives $\nabla_x G_n$, $\nabla_y G_n$ and $\nabla_x\nabla_y G_n$ of the multiscale periodic Green function}\label{Homog_Green_SecGreen2}
    \sectionmark{Pointwise estimates on the derivatives of the multiscale Green function}
    
    Proposition~\ref{Homog_Green_ThGreen2} relies on the Lipschitz theory of~\cite{AvellanedaLin}.
    Indeed, if~$u_n$ satisfies
    \begin{align}
      \label{Homog_Green_Eq_Sur_Green}
      -{\rm div}\left( A(nx) \cdot \nabla u_n(x) \right)=a \quad\text{in}\quad {\rm B}(x_0,r),
    \end{align}
    for some~$a \in \mathbb{R}$, then~\cite[Lem.\ 16]{AvellanedaLin} shows that
    \begin{align}
    \label{Homog_Green_EstimThAL}
    &\left| \nabla u_n(x_0) \right| \leq C r^{-1-d/2} \left( \int_{{\rm B}(x_0,r)} \left|u_n \right|^2 \right)^{1/2}+C|a|r,
    \end{align}
    where $C>0$ is a constant independent of $n$.
    As is detailed below, we treat separately the $2$-dimensional case.

    \begin{proof}[Proof of Proposition~\ref{Homog_Green_ThGreen2}]
      Assume first that the dimension satisfies $d\geq 3$.
      Let~$x_0 \in \mathbb{R}^d, y_0 \in x_0+Q$, $x_0\neq y_0$, and~$2r:=|x_0-y_0|$ (once more, by periodicity, this is the only relevant case).
      
      Since~$G_n(x,y_0)$ satisfies
      \begin{align*}
	-{\rm div}_x \left( A(nx)\cdot \nabla_x G_n(x,y_0) \right) = -1 \quad\text{in}\quad {\rm B}(x_0,r),
      \end{align*}
      and thanks to~\cite[Lem.\ 16]{AvellanedaLin}, \textit{id est}~\eqref{Homog_Green_EstimThAL}, there holds
      \begin{align}\label{Homog_Green_audessus}
	\left|\nabla_xG_n(x_0,y_0) \right| \leq C r^{-1} \left\| G_n(\cdot,y_0)\right\|_{{\rm{L}}^\infty({\rm B}(x_0,r))}+C r.
      \end{align}
      As a consequence, Estimate~\eqref{Homog_Green_Green1_debut} and~\eqref{Homog_Green_audessus} yield~\eqref{Homog_Green_Green2} (for~$x=x_0$ and~$y=y_0$).
      By transposition, \eqref{Homog_Green_Green2prim} is also established.
    
      By differentiating~\eqref{Homog_Green_DefG2} with respect to~$y$, we obtain
      \begin{align*}
	-{\rm div}_x \left(A(nx) \cdot \nabla_x \nabla_yG_n(x,y_0) \right) = 0 \quad\text{in}\quad {\rm B}(x_0,r/2).
      \end{align*}
      Therefore, thanks to~\eqref{Homog_Green_EstimThAL},
      \begin{align*}
	\left|\nabla_x \nabla_y G_n(x_0,y_0)\right| 
	\leq& C r^{-1} \left\| \nabla_y G(\cdot,y_0) \right\|_{{\rm{L}}^\infty\left({\rm B}(x_0,r/2)\right)}.
      \end{align*}
      By using~\eqref{Homog_Green_Green2prim} and since~$2r=|x_0-y_0|$, we conclude that
      \begin{align*}
	\left|\nabla_x \nabla_y G_n(x_0,y_0)\right| 
	\leq C |x_0-y_0|^{-d},
      \end{align*}
      and establish~\eqref{Homog_Green_Green3}.
      
      \medskip
      
      Now, let the dimension $d=2$. 
      As shown in the proof of Proposition~\ref{Homog_Green_ThGreen1}, the $2$-dimensional periodic Green function~$G_n$ can be expressed as
      \begin{align}
	  \label{Homog_Green_3vers2_n}
	  G_n(x,y) = \int_0^1 \widetilde{G}_n((x,t),(y,0)) {\rm{d}} t
      \end{align}
      where~$x, y \in \mathbb{R}^2$ and~$t\in \mathbb{R}$ in the equation above, and~$\widetilde{G}_n$ is the~$3$-dimensional periodic Green function of the operator 
      $\widetilde{L}u := -{\rm div}_{x} \left( A(n\cdot) \cdot \nabla_{x} u \right) - \partial_{tt} u$.
      Hence, we deduce the 2-dimensional versions of~\eqref{Homog_Green_Green2}, \eqref{Homog_Green_Green2prim} and~\eqref{Homog_Green_Green3} by integrating their $3$-dimensional versions applied to $\widetilde{G}_n$ (in the same manner as in Section~\ref{SecDim2}).
    \end{proof}

  \section{A decomposition of the periodic Green function}\label{Homog_Green_SecAnother}
    
    In this section, we prove Proposition~\ref{Homog_Green_ThDecompose}.
    For the sake of simplicity, we first assume that the homogenized matrix~$A^\star$ is the identity.
    We postpone the proof in the general case until Section~\ref{Homog_Green_Sym_Mat}.
    
    \subsection{Case where the homogenized matrix is the identity}\label{Homog_Green_SecId}
    
    For convenience, we have split the proof of Proposition~\ref{Homog_Green_ThDecompose} in two parts: first, we show that the series in~\eqref{Homog_Green_Decomposition} actually converges; second, we check that its limit is the periodic Green function~$G$.
    
    The two main steps of the proof of convergence are the following: first a Taylor expansion allows for expressing the terms~$H^k$ in~\eqref{Homog_Green_Decomposition} as functions of~$\nabla_x \nabla_y \mathcal{G}$. Then, we approximate the Green function of the multiscale problem with the Green function of the homogenized problem (see~\cite{Avellaneda1991}).
    Second, we take advantage of the long-range symmetries of the Green function of the homogenized problem and establish the convergence of the series in~\eqref{Homog_Green_Decomposition}. There, the sets~$\Gamma_m$ are crucial, since the convergence in~\eqref{Homog_Green_Decomposition} is not uniform in~$k$.
    
    \begin{proof}[Proof of convergence of the series in~\eqref{Homog_Green_Decomposition}]
      We denote by~$\mathcal{G}_\star(x,y)$ the fundamental solution in~$\mathbb{R}^d$ to the homogenized problem.
      The homogenized matrix is the identity; therefore~$\mathcal{G}_\star$ is explicitly expressed as
      \begin{align}\label{Homog_Green_GreenLaplace}
	\mathcal{G}_\star(x,y)=C_d |x-y|^{2-d},
      \end{align}
      where~$C_d$ is a constant (see~\cite[(2.12) p.\ 17]{GT}).
      Since~$\mathcal{G}_\star(x,y)$ only depends on~$|x-y|$, we henceforth redefine
      \begin{align*}
        \mathcal{G}_\star(x):=\mathcal{G}_\star(x,0).
      \end{align*}
      
      \paragraph{Step 1:}
	Assume that~$x \in {\rm Q}, y-x \in {\rm Q}$ and~$k \notin 4 {\rm Q}$. We reformulate~$H^k$ using the Taylor formula:
	\begin{align}
	  \nonumber
	  H^k(x,y)
	  =&
	  -\int_{{\rm Q}} x' \cdot \int_0^1 \Bigg(
	  \nabla_x \mathcal{G}(x+tx',y-k)
	  \\
	  \nonumber
	  &~~~~-\int_{{\rm Q}} \nabla_x \mathcal{G}(x+tx',y+y'-k) {\rm{d}} y' \Bigg) {\rm{d}} t {\rm{d}} x'
	  \\
	  \nonumber
	  =& \int_{{\rm Q}} x' \cdot \Bigg(\int_0^1 \int_{{\rm Q}} y'
	  \\
	  &~~~~
	   \cdot \int_0^1\nabla_y \nabla_x \mathcal{G}(x+tx',y+\tau y'-k) {\rm{d}} \tau {\rm{d}} y' {\rm{d}} t\Bigg) {\rm{d}} x'.
	  \label{Homog_Green_ExpressionDerive2}
	\end{align}
	We approximate~$\nabla_x \nabla_y \mathcal{G}$ by~$\nabla_x\nabla_y \mathcal{G}_\star$. 
	More precisely, thanks to~\cite[Corollary \ p.\ 905]{Avellaneda1991}, there exists constants~$C>0$ and~$\beta \in (0,1)$ such that, for all~$x' \neq y' \in \mathbb{R}^d$,
	\begin{align}
	  \nonumber
	  & \Bigg|\nabla_x \nabla_y \mathcal{G}(x',y') 
	  - \sum_{i,j=1}^d  \partial_{x_{i}}\partial_{y_j} \mathcal{G}_\star(x'-y')\left(e_i + \nabla w_i\left(x'\right) \right) \otimes \left( e_j+\nabla w_j^\dagger\left( y'\right) \right) \Bigg|
	  \\
	  \label{Homog_Green_DefGreen3}
	  &\leq C |x'-y'|^{-d-\beta}.
	\end{align}
	In~\eqref{Homog_Green_DefGreen3}, $w_i$ and~$w_j^\dagger$ denote the correctors associated to the matrix~$A$, respectively~$A^T$.
	That is, $w_i$ is the periodic function of zero mean satisfying
	\begin{align*}
	  -{\rm div}\left(A(x)\cdot \left(\nabla w_i(x)+e_i\right) \right)=0, \quad\text{for}\quad x \in {\rm Q}.
	\end{align*}
	Identity~\eqref{Homog_Green_ExpressionDerive2} and Estimate~\eqref{Homog_Green_DefGreen3} imply
	\begin{align}\label{Homog_Green_SurH1}
	  H^k(x,y)
	  =&
	  H^{1,k}(x,y)+H^{2,k}(x,y),
	\end{align}
	where
	\begin{align}\label{Homog_Green_SurH2}
	  \left|H^{1,k}(x,y)\right| \leq C |k|^{-d-\beta},
	\end{align}
	and
	\begin{align*}
	  H^{2,k}(x,y):=&
	  \sum_{i, j=1}^d\int_{{\rm Q}^2} \int_{[0,1]^2} 
	  \left(x' \cdot \left(e_i + \nabla w_i(x+tx') \right)\right) 
	  \\
	  &~~~
	  \left(y' \cdot \left(e_j + \nabla w_j^\dagger(y+\tau y'-k) \right)\right)
	  \\
	  &~~~
	  \partial_{x_{i}}\partial_{y_j} \mathcal{G}_\star\left(x+tx'-(y+\tau y'-k)\right) 
	  {\rm{d}} \tau  {\rm{d}} t {\rm{d}} y' {\rm{d}} x'.
	\end{align*}
	
	All the correctors~$w_i$ and~$w_j^\dagger$ are bounded; furthermore, Formula~\eqref{Homog_Green_GreenLaplace} implies that the third-order derivatives of~$\mathcal{G}_\star$ evaluated at~$x-y$ are bounded by~$C |x-y|^{-d-1}$, where~$C$ is a constant independent of~$x$ and~$y$. 
	Therefore, a Taylor expansion yields a constant~$C$ such that
	\begin{align}
	  &\Bigg|H^{2,k}(x,y)-\sum_{i, j=1}^d\partial_{x_{i}}\partial_{y_j} \mathcal{G}_\star(k) 
	  Q_{ij}(x,y)\Bigg|
	  \leq C |k|^{-d-1},
	  \label{Homog_Green_SurH3}
	\end{align}
	where
	\begin{align*}
	  Q_{ij}(x,y):=&\int_{{\rm Q}^2} \int_{[0,1]^2} 
	  \left(x' \cdot \left(e_i + \nabla w_i(x+tx') \right)\right) 
	  \\&
	  \left(y' \cdot \left(e_j + \nabla w_j^\dagger(y+\tau y') \right)\right)
	  {\rm{d}} \tau  {\rm{d}} t {\rm{d}} y' {\rm{d}} x'.
	\end{align*}
	Then, a straightforward integration yields
	\begin{align*}
	  Q_{ij}(x,y)=&\int_{{\rm Q}}\left(x'_i + w_i(x+x')- w_i(x) \right) {\rm{d}} x'
	  \int_{{\rm Q}} \left(y'_j + w_j^\dagger(y+y') - w_j^\dagger(y)\right) {\rm{d}} y',
	\end{align*}
	which can be simplified as
	\begin{align*}
	  Q_{ij}(x,y)=& w_i(x)w_j^\dagger(y),
	\end{align*}
	since the correctors ~$w_i$ and~$w_j^\dagger$ are of zero mean.
	Note that~$Q_{ij}$ defined above does not depend on~$k \in \mathbb{Z}^d$ because the correctors~$w_i$ and~$w_j^\dagger$ are periodic.
	As a consequence, collecting~\eqref{Homog_Green_SurH1}, \eqref{Homog_Green_SurH2}, and~\eqref{Homog_Green_SurH3} yields
	\begin{align}
	  \left|\sum_{k \in \Gamma_m} H^k(x,y) \right|
	  \leq& C 2^{-m\beta}
	  + 
	  \left|Q_{ij}(x,y)\right| \sum_{i, j=1}^d
	  \left|\sum_{k \in \Gamma_m}
	  \partial_{x_{i}}\partial_{y_j} \mathcal{G}_\star(k) \right|.
	  \label{Homog_Green_SurH4}
	\end{align}
	
      \paragraph{Step 2:}
	Remark that~$Q_{ij}(x,y) \neq 0$ in general and that~$\left|\partial_{x_{i}}\partial_{y_j} \mathcal{G}_\star(k)\right|$ scales like~$|k|^{-d}$. 
	Therefore, by \eqref{Homog_Green_SurH3}, in general, the series in~\eqref{Homog_Green_Decomposition} does not converge  absolutely with respect to~$k$.
	
	Invoking once more~\eqref{Homog_Green_GreenLaplace}, we obtain
	\begin{align*}
	  \partial_{x_{i}}\partial_{y_j} \mathcal{G}_\star(k)
	  =
	  \left\{
	  \begin{aligned}
	    &C_d d (d-2) \frac{k_ik_j}{|k|^{d+2}} 
	    &&\quad\text{if}\quad i \neq j,\\
	    &C_d (d-2) \frac{dk_i^2-|k|^2}{|k|^{d+2}} 
	    &&\quad\text{if}\quad i = j.\\
	  \end{aligned}
	  \right.
	\end{align*}
	Thanks to the symmetry of~$\Gamma_m$ with respect to the hyperplane~$x_i=0$, in the case~$i \neq j$, and thanks to the invariance of~$\Gamma_m$ under the relabeling of the components of the vector~$k$, in the case~$i=j$, we deduce that
	\begin{align}\label{Homog_Green_PropGstar}
	  \sum_{k \in \Gamma_m} \partial_{x_{i}}\partial_{y_j} \mathcal{G}_\star(k) =0, \qquad \forall i, j \in [\![1,d]\!].
	\end{align}
	As a consequence, recalling~\eqref{Homog_Green_SurH4},
	\begin{align}
	  \label{Homog_Green_MajorH}
	  \left|\sum_{k \in \Gamma_m} H^k(x,y) \right| \leq  C 2^{-m\beta}.
	\end{align}
	
	Moreover, by~\cite[Prop.\ 4]{LegollGreen}, there exists a constant~$C$ such that for any~$x' \neq y'$, there holds
	\begin{align}
	  \label{Homog_Green_LegollGreen:1}
	  \left|\mathcal{G}(x',y')\right|\leq C |x'-y'|^{-d+2}.
	\end{align}
	Hence, for any~$k \in \mathbb{Z}^d$, 
	\begin{align}
	  \left|H^k(x,y)\right| \leq & 
	    C |x-y|^{-d+2}.
	    \label{Homog_Green_SurH5}
	\end{align}
	As a consequence of~\eqref{Homog_Green_MajorH} and~\eqref{Homog_Green_SurH5}, the series~\eqref{Homog_Green_Decomposition} converges absolutely in~$m$ as follows:
	\begin{align*}
	  \sum_{m=0}^{+\infty} \left|\sum_{k \in \Gamma_m} H^k(x,y)\right| \leq C |x-y|^{-d+2},
	\end{align*}
	for all~$x \neq y$, $y - x \in {\rm Q}$. Thus, we have recovered~\eqref{Homog_Green_Green1_debut} by an approach different from Proposition~\ref{Homog_Green_ThGreen1}.
    \end{proof}
    
    Now that we have justified that the series in~\eqref{Homog_Green_Decomposition} converges, we prove that  its limit, that we denote by~$\overline{G}$ for the moment, \textit{id est},
      \begin{align}
	\label{Homog_Green_DefGnT}
	\overline{G}(x,y):=\sum_{m=0}^{+\infty} \left( \sum_{k \in \Gamma_m} H^k(x,y) \right),
      \end{align}
      is actually equal to ~$G$.
    It can easily be checked that $\overline{G}$ is periodic in~$x$ and~$y$ and satisfies~\eqref{Homog_Green_DefG2}. The technical point is~\eqref{Homog_Green_MoyenneGreen}, the proof of which relies on the former Taylor expansion and on the classical result~\cite[Prop.\ 8]{LegollGreen}. According to the latter, there exists a constant~$C>0$ such that, for all~$x' \neq y'$,
    \begin{align}\label{Homog_Green_LegollGreen:3}
      \left|\nabla_x \nabla_y \mathcal{G}(x',y')\right| \leq C |x'-y'|^{-d}.
    \end{align}

    \begin{proof}[Proof of Identity~\eqref{Homog_Green_Decomposition}]
      Obviously, $\overline{G}$ is periodic in~$y$. Since~$A$ is periodic, there even holds
      \begin{align*}
	\mathcal{G}(x,y-k)=\mathcal{G}(x+k,y),
      \end{align*}
      for any~$k \in \mathbb{Z}^d$.
      Therefore~$\overline{G}$ is also periodic in~$x$.
      Moreover, we check that
      \begin{align*}
	-{\rm div}\left(A(x) \cdot \nabla H^k(x,y) \right)=&
	\delta_0(x-y+k)
	\\
	&
	-2\chi_{{\rm Q}}(x-y+k)
	+\int_{{\rm Q}} \chi_{{\rm Q}}(x-y-y'+k) {\rm{d}} y',
      \end{align*}
      where $\chi_{{\rm Q}}$ is the characteristic function of the set ${\rm Q}$.
      Whence
      \begin{align*}
	  -{\rm div}\left(A(x) \cdot \left(\sum_{m=0}^{+\infty} 
	  \left(\sum_{k \in \Gamma_m} 
	  \nabla_x H^k(x,y)\right)\right)\right)
	  =\sum_{k \in \mathbb{Z}^d}\delta_0(x-y+k) - 1 \quad\text{in}\quad \mathbb{R}^d.
      \end{align*}
      To summarize, $\overline{G}(x,y)$ defined by~\eqref{Homog_Green_DefGnT} is~$x$-periodic and~$y$-periodic, and satisfies~\eqref{Homog_Green_DefG2}.
      
      Next, we justify that ~$\overline{G}$ satisfies~\eqref{Homog_Green_MoyenneGreen}.
      By integrating~\eqref{Homog_Green_ExpressionDerive2} along the~$y$ variable, there holds
      \begin{align*}
        \int_Q H^k(x,y) {\rm{d}} y = &
        \int_0^1\int_0^1 \int_{{\rm Q}}\int_{y \in \partial Q}    \\
        &~~~x' \cdot \Bigg( \int_{{\rm Q}}
         \nabla_x \mathcal{G}(x+tx',y+\tau y'-k)  \otimes y' {\rm{d}} y' \Bigg) \cdot {\rm{d}} \vec{S}(y)  {\rm{d}} x' {\rm{d}} \tau{\rm{d}} t.
      \end{align*}
      Hence, due to cancellations on the boundaries of the translated cubes $k +Q$,
      \begin{align*}
	\int_{{\rm Q}} \sum_{|k| < 2^m} H^k(x,y) {\rm{d}} y
	=&
	\int_0^1\int_0^1 \int_{{\rm Q}}\int_{y \in \Xi_m} x' \cdot \Bigg( \int_{{\rm Q}} 
         \nabla_x \mathcal{G}(x+tx',y+\tau y'-k) 
	\\
	&~~~
         \otimes  y'  {\rm{d}} y' \Bigg) \cdot  {\rm{d}} \vec{S}(y) {\rm{d}} x' {\rm{d}} \tau{\rm{d}} t,
      \end{align*}
      where~$\Xi_m$ is the boundary of the following set:
      \begin{align*}
        \underset{|k|< 2^{m+1}}{\cup} \left(k+Q\right).
      \end{align*}
      Now, by Taylor expansion, and thanks to~\eqref{Homog_Green_LegollGreen:3}, for all~$\tau \in [0,1]$, $x, x', y' \in {\rm Q}$, and $y \in \Xi_m$,
	\begin{align*}
	  \left|\nabla_x \mathcal{G}(x+tx',y+\tau y')-\nabla_x \mathcal{G}(x+tx',y)\right| \leq C 2^{-md}.
	\end{align*}
	Therefore
	\begin{align*}
	  \left|\int_{{\rm Q}}\nabla_x \mathcal{G}(x+tx',y+\tau y') \otimes y' {\rm{d}} y'\right|
	  \leq \left|\int_{{\rm Q}}\nabla_x \mathcal{G}(x+tx',y) \otimes y' {\rm{d}} y'\right|+C2^{-md}.
	\end{align*}
	The integral in the right-hand term of the above estimate vanishes since, by symmetry,
	  $\int_{{\rm Q}}  y' {\rm{d}} y'=0$.
	As a consequence, 
	\begin{align*}
	  \left|\int_{{\rm Q}}\nabla_x \mathcal{G}(x+tx',y+\tau y') \otimes y' {\rm{d}} y'\right|
	  \leq C2^{-md}.
	\end{align*}
	Whence, since the surface area of $\Xi_m$ is bounded by $C 2^{m(d-1)}$, we have
	\begin{align*}
	  \left|\int_{{\rm Q}} \sum_{|k| < 2^{m+1}} H_{k}(x,y) {\rm{d}} y\right|
	  \leq& C2^{-m}.
	\end{align*}
	As a consequence, letting~$m \rightarrow+\infty$, we deduce that~$\overline{G}$ satisfies~\eqref{Homog_Green_MoyenneGreen}.
	By the same arguments transposed from~$\overline{G}(x,y)$ to~$\overline{G}(y,x)$, it can be shown that~$\overline{G}$ also satisfies~\eqref{Homog_Green_MoyenneGreen2}.
	Therefore, by Proposition~\ref{Homog_Green_PropExistUne}, we have
	\begin{align*}
	  \overline{G}(x,y)=G_n(x,y),
	\end{align*}
	which concludes the proof.
      \end{proof}

    \subsection{General case}\label{Homog_Green_Sym_Mat}
    
      As is easily seen in the proof of Proposition~\ref{Homog_Green_ThDecompose}, the fact that the homogenized matrix~$A^\star$ is the identity is only used for establishing~\eqref{Homog_Green_PropGstar}.
      One also realizes that, would~\eqref{Homog_Green_PropGstar} be replaced by the following estimates:
      \begin{align}\label{Homog_Green_PropGstar2}
        \left|\sum_{k \in \Gamma_m} \partial_{x_{i}}\partial_{y_j} \mathcal{G}_\star(k) \right| \leq C_m \text{ for all } m \in [\![1,+\infty ),  \quad\text{and}\quad \sum_{m= 1}^{+\infty} C_m<+\infty,
      \end{align}
      for well-chosen sets $\Gamma_m$, then the conclusions of Proposition~\ref{Homog_Green_ThDecompose} would also apply.

      We show that the sets $\Gamma_m$ defined by~\eqref{Homog_Green_DefSet1} and~\eqref{Homog_Green_DefSet2} are such that Estimates~\eqref{Homog_Green_PropGstar2} are satisfied.
      Hence, the conclusions of Proposition~\ref{Homog_Green_ThDecompose} are true without any assumption on the homogenized matrix~$A^\star$ of $A$.

      The homogenized matrix~$A^\star$ is (constant) coercive.
      Henceforth the Green function in~$\mathbb{R}^d$ associated with the operator~$-{\rm div}\left(A^\star\cdot \nabla \right)$ is
      \begin{align*}
        \mathcal{G}_\star(x)=\frac{C\left(A^\star_{\rm s}\right)}{\left(x\cdot \left(A^\star_{\rm s}\right)^{-1} \cdot x\right)^{(d-2)/2}},
      \end{align*}
      where~$C\left(A^\star_{\rm s}\right)$ is a constant and~$A^\star_{\rm s}$ is the symmetric part of the matrix~$A^\star$.
      Whence
      \begin{align*}
         \nabla^2\mathcal{G}_\star(x)=C(d-2) \frac{d \left(\left(A^\star_{\rm s}\right)^{-1} \cdot x \right)\otimes \left(\left(A^\star_{\rm s}\right)^{-1} \cdot x\right)-\left(x \cdot \left(A^\star_{\rm s}\right)^{-1} \cdot x \right) \left(A^\star_{\rm s}\right)^{-1}}{\left(x\cdot \left(A^\star_{\rm s}\right)^{-1} \cdot x\right)^{(d+2)/2}}.
      \end{align*}

      Besides, there exists an orthogonal matrix~$O$ and positive scalars~$\lambda_i$, $i \in [\![1,d]\!]$ such that
      \begin{align*}
        A^\star_{\rm s} = O^{-1} \cdot \text{diag}\left(\lambda_1^{-2},\cdots,\lambda_d^{-2}\right) \cdot O.
      \end{align*}
      Therefore, denoting by~$f_j$ the orthonormal base related to~$O$, and decomposing
      \begin{align*}
        x=\sum_{j=1}^d \lambda_j^{-1}\widetilde{x}_j f_j,
      \end{align*}
      one obtains
      \begin{align*}
        \nabla^2\mathcal{G}_\star(x)=C(d-2) \frac{d \sum_{i, j=1}^d \lambda_i\lambda_j \widetilde{x}_i \widetilde{x}_j f_i \otimes f_j-\left(\sum_{i=1}^d \widetilde{x}_i^2 \right) \left(\sum_{i=1}^d \lambda_i^2 f_i \otimes f_i\right)}{\left(\sum_{i=1}^d \widetilde{x}_i^2\right)^{(d+2)/2}}.
      \end{align*}
      For~$r \in \mathbb{R}_+$, we define the following set:
      \begin{align}
        \label{Homog_Green_DefOmegam}
        \Omega_r:=\left\{x \in \mathbb{R}^d, \sum_{j=1}^d |\widetilde{x}_j|^2=r^2 \right\}.
      \end{align}
      Remark that there obviously holds
      \begin{align*}
	\Omega_r= \left\{ x \in \mathbb{R}^d, \left(x\cdot \left(A^\star_{\rm s}\right)^{-1} \cdot x\right)=r^2\right\}.
      \end{align*}
      On the one hand, if~$i \neq j$, $f_i\otimes f_j : \nabla^2\mathcal{G}_\star(x)$ changes sign with respect to the transformation~$\widetilde{x}_j \mapsto -\widetilde{x}_j$.
      Therefore, if~$i \neq j$, there holds
      \begin{align}\label{Homog_Green_Gstar1}
        \int_{\Omega_r} f_i\otimes f_j : \nabla^2\mathcal{G}_\star(x){\rm{d}} S(x)=0.
      \end{align}
      On the other hand,
      \begin{align*}
         f_i\otimes f_i:\nabla^2\mathcal{G}_\star(x)
        =C(d-2) \frac{\lambda_i^2 \left(d \widetilde{x}_i^2 -\left(\sum_{j=1}^d \widetilde{x}_j^2 \right)\right)}
        {\left(\sum_{k=1}^d \widetilde{x}_k^2\right)^{(d+2)/2}}.
      \end{align*}
      By invariance of~$\Omega_r$ under the relabeling of the coordinates~$\widetilde{x}_j$,
      \begin{align*}
        \int_{\Omega_r} \frac{\sum_{k=1}^d \widetilde{x}_k^2}
        {\left(\sum_{k=1}^d \widetilde{x}_k^2\right)^{(d+2)/2}}{\rm{d}} S(x)
        =d \int_{\Omega_r} \frac{\widetilde{x}_i^2}{\left(\sum_{k=1}^d \widetilde{x}_k^2\right)^{(d+2)/2}}{\rm{d}} S(x).
      \end{align*}
      As a consequence,
      \begin{align}
        \int_{\Omega_r} f_i \otimes f_i : \nabla^2\mathcal{G}_\star(x){\rm{d}} S(x)=0.
        \label{Homog_Green_Gstar2}
      \end{align}
      
      Hence, we define
      \begin{align*}
        \Lambda_m:=\underset{r \in [2^m,2^{m+1})}{\cup} \Omega_r,
      \end{align*}
      and~$\Gamma_m:=\Lambda_m\cap \mathbb{Z}^d$.
      By convergence of Riemann integrals, we deduce that
      \begin{align*}
        \left|\sum_{k \in \Gamma_m} \partial_i\partial_j \mathcal{G}_\star(k) \right|
        \leq& C
        \left|\partial \Lambda_m\right|\sup_{x \in \Lambda_m} \left|\partial_i\partial_j \mathcal{G}_\star(x) \right|
        \\
        &+C\left| \Lambda_m\right| \sup_{x \in \Lambda_m} \left|\partial_i\partial_j \nabla \mathcal{G}_\star(x) \right|
        \\
        &+C\left| \int_{\Lambda_m} \partial_i\partial_j\mathcal{G}_\star(x) {\rm{d}} x \right|.
      \end{align*}
      By~\eqref{Homog_Green_Gstar1} and~\eqref{Homog_Green_Gstar2}, we deduce that the last integral in the above estimate vanishes.
      Furthermore, straightforward estimates on the derivatives of~$\mathcal{G}_\star$ yield
      \begin{align*}
        \left|\sum_{k \in \Gamma_m} \partial_i\partial_j \mathcal{G}_\star(k) \right|
        \leq& C 2^{-m}.
      \end{align*}
      This implies the convergence of the series in~\eqref{Homog_Green_Decomposition} in the general case.

    \subsection{Case of systems}\label{Homog_Green_Systems}
      
      In the case of systems, the Green function~$\mathcal{G}_\star$ of~$-{\rm div}\left(A^\star \cdot \nabla \right)$ in~$\mathbb{R}^d$ reads:
      \begin{align}\label{Homog_Green_FuncSys}
        \mathcal{G}_\star^{\alpha\beta}(x)= C\left(\left(A^\star_{\rm s}\right)^{\alpha\beta}\right) \left( \sum_{i, j=1}^d x_i \left(\left(A^\star_{\rm s}\right)^{\alpha\beta}\right)^{-1}_{ij}x_j \right)^{-\frac{d-2}{2}},
      \end{align}
      where~$C\left(\left(A^\star_{\rm s}\right)^{\alpha\beta}\right)$ is a constant depending on the matrix~$\left(\left(A^\star_{\rm s}\right)^{\alpha\beta}_{ij}\right)_{i, j}$, and
      \begin{align*}
        \left(A^\star_{\rm s}\right)^{\alpha\beta}_{ij} = \frac{1}{2}\left( \left(A^\star\right)^{\alpha\beta}_{ij}+\left(A^\star\right)^{\alpha\beta}_{ji}\right).
      \end{align*}
      Whence, by the above arguments of Sections~\ref{Homog_Green_SecId} and~\ref{Homog_Green_Sym_Mat}, we have the following decomposition:
      \begin{align}\label{Homog_Green_Decomposition_Sys}
	G^{\alpha\beta}(x,y)
	=\sum_{m=0}^{+\infty} 
	\left(\sum_{k \in \Gamma^{\alpha\beta}_m} 
	\left(H^k\right)^{\alpha\beta}(x,y) \right),
      \end{align}
      where the definition of each term will be made precise below.
      
      The functions~$H^k$ are defined by 
      \begin{align}
	\nonumber
	\left(H^k\right)^{\alpha\beta}(x,y)
	&:=
	\mathcal{G}^{\alpha\beta}(x,y-k)
	-\int_{{\rm Q}} \mathcal{G}^{\alpha\beta}(x,y+y'-k) {\rm{d}} y'
	\\
	\nonumber
	&~~~~-\int_{{\rm Q}} \mathcal{G}^{\alpha\beta}(x+x',y-k) {\rm{d}} x'
	\\
	&~~~~+\int_{{\rm Q}} \int_{{\rm Q}} \mathcal{G}^{\alpha\beta}(x+x',y+y'-k) {\rm{d}} y' {\rm{d}} x',
	\label{Homog_Green_DefHk_Sys}
      \end{align}
      where the function $\mathcal{G}$ is the Green function in~$\mathbb{R}^d$ of the operator~$-{\rm div}\left(A \cdot \nabla\right)$.
      Last, we define the sets~$\Gamma_m^{\alpha\beta}$ by:
      \begin{align*}
        &\Gamma_0^{\alpha\beta}=\left\{k \in \mathbb{Z}^d, 0 \leq k\cdot \left(\left(A^\star_{\rm s}\right)^{\alpha\beta}\right)^{-1} \cdot k < 2^2 \right\}, \\
        &\Gamma_m^{\alpha\beta} =\left\{k \in \mathbb{Z}^d, 2^{2m} \leq k\cdot \left(\left(A^\star_{\rm s}\right)^{\alpha\beta}\right)^{-1} \cdot k < 2^{2m+2} \right\} && \quad\text{if}\quad m \geq 1,
      \end{align*}
      where $\left(A^\star_{\rm s}\right)^{\alpha\beta}$ is considered as a matrix in~$\mathbb{R}^{d\times d}$.

    \section*{Acknowledgement}
      The author would like to thank both Fr\'ed\'eric Legoll and Pierre-Lo\"ic Roth\'e, who have brought his attention to this question.
      Fr\'ed\'eric Legoll has also helped for the proof in the $2$-dimensional case.
      Moreover, the author acknowledges Claude Le Bris for his suggestions concerning the decomposition of the Green functions.
      Finally, the author gratefully thanks Xavier Blanc for reading the first version of this article.
    
    \bibliographystyle{plain}
    \def\cprime{$'$} \def\cprime{$'$}

\end{document}